\newcommand{\C}{{\cal C}}
\newcommand{\N}{{\cal N}}
\newcommand{\A}{{\cal{A}}}
\newcommand{\E}{{\cal{E}}}
\newcommand{\D}{{\cal{D}}}
\newcommand{\U}{{\cal{U}}}
\newcommand{\R}{\mathbb{R}}
\newtheorem{definition}{Definition}
\newtheorem{theorem}{Theorem}
\newtheorem{lemma}{Lemma}
\newtheorem{proposition}{Proposition}
\newtheorem{corollary}{Corollary}
\title{
Solutions of a pure critical exponent problem involving  the half-laplacian in annular-shaped domains}
\author{Antonio 
Capella\footnote{The author was supported by grants  PAPIIT  IN101209,  MTM2008-06349-C03-01 and 2009SGR345.}}
\begin{document}
\maketitle
\begin{center}
{\small 
Instituto de Matem\'aticas, Universidad Nacional Aut\'onoma de M\'exico, \\
Circuito Exterior, C.U., 04510 M\'exico D.F., Mexico. \\
E-mail address : capella@matem.unam.mx 
}
\end{center}
\medskip

\begin{abstract}
We consider the nonlinear and nonlocal problem 
$$
A_{1/2}u=|u|^{2^\sharp-2}u\ \text{ in }\ \Omega, \quad u=0 \text{ on } \partial\Omega
$$where  $A_{1/2}$ represents the square root of the Laplacian in a bounded domain
with zero Dirichlet boundary conditions, $\Omega$ is a bounded smooth domain in $\R^n$, $n\ge 2$
and $2^{\sharp}=2n/(n-1)$ is the critical trace-Sobolev exponent. We assume that $\Omega$ is 
annular-shaped, i.e., there exist $R_2>R_1>0$ constants such that 
$\{x\in\R^n\ \text{ s.t. }\ R_1<|x|<R_2\}\subset\Omega$ and $0\notin\Omega$, and 
invariant under a group $\Gamma$ of orthogonal transformations of $\R^n$ without fixed points. 
We establish the existence of positive and multiple sing changing solutions 
in the two following cases:  
if $R_1/R_2$ is arbitrary and the minimal $\Gamma$-orbit of $\Omega$ is large enough, 
or  if  $R_1/R_2$ is small enough and $\Gamma$ is arbitrary.
  \end{abstract}
\section{Introduction}

We are interested on the existence of solutions to the problem 
\begin{equation}\label{half}
\left\{
\begin{array}{ll}
A_{1/2} u = |u|^{2^\sharp-2}u&\text{in }  \Omega\\
\\
u=0&\text{on } \partial \Omega,
\end{array}
\right.
\end{equation}
where $\Omega$ is a smooth bounded domain in $\mathbb{R}^n$, $n\ge 2$ and $2^\sharp=2n/(n-1)$ is the critical trace-Sobolev exponent, and $A_{1/2}$ stands for the square root of the  Laplacian  $-\Delta$ in $\Omega$ 
with zero boundary values on $\Omega$. The action of $A_{1/2}$ can be defined as follows: let 
$\{\lambda_k, \varphi_k\}_{k=1}^\infty$ denote the eigenvalues and eigenfunctions of the Laplacian with zero 
Dirichlet boundary values on $\partial\Omega$.  Assume $\Vert \varphi_k\Vert_{L^2(\R^n)}=1$. Then 
the square root of the Dirichlet Laplacian, denoted by $A_{1/2}:H^1(\Omega)\to L^2(\Omega)$, is given by 
$$
u=\sum_{k=1}^\infty c_k\varphi_k\ \mapsto  A_{1/2}u=\sum_{k=1}^\infty c_k\lambda_k^{1/2}\varphi_k.
$$

Operators like  $A_{1/2}$, and in general fractions of the Laplacian, are infinitesimal generators of L\'evy stable  diffusion 
processes and appear in anomalous diffusion in fluids, flame propagation, chemical reactions in liquids, geophysical 
fluid dynamics and american options. 

The local version of problem \eqref{half} that involves the Laplacian operator and the critical exponent  
is given by 
\begin{equation}\label{lap}
\left\{
\begin{array}{ll}
-\Delta u = |u|^{2^*-2}u&\text{in }  \Omega\\
\\
u=0&\text{on } \partial \Omega,
\end{array}
\right.
\end{equation}
where $2^*:=2n/(n-2)$ is the critical Sobolev exponent. Problem \eqref{lap} has been widely studied. 
It is know that the existence of solutions depends on the domain $\Omega$. If $\Omega$ 
is strictly star-shaped, it was showed by Pohozaev  \cite{Pohozaev65} that  \eqref{lap} has  no non-trivial solution.  
In  \cite{BrezisNirenberg83},  Brezis and Nirenberg showed that by adding a small 
linear perturbation to the critical power nonlinearity compactness  and existence of solution are both restored.
The first existence results for nontrivial solutions was given by Coron  \cite{Coron84} for a domain $\Omega$ that
has a small enough hole. Later Bahri and Coron \cite{BahriCoron88} showed that the same holds for holes of any size.  
In \cite{ClappWeth09}, Clapp and Weth extended Coron's result to show that if $\Omega$ has a small enough hole, \eqref{lap} has at least two solutions. Regarding sign changing solutions, existence is only know 
for domains with symmetries. The first of these type of results  was given by Marchi and Pacella \cite{MachiPacella93}. 
For symmetric domains with small holes existence was showed by Clapp and Weth    \cite{ClappWeth04},   and 
Clapp and Pacella \cite{ClappPacella08}. In \cite{ClappPacella08} they assume that $\Omega$ is annular-shaped, i.e.,
$$
0\notin \Omega\quad \text{and}\quad\Omega  \supset A_{R_1,R_2}=\left\{ x\in\R^n\ \text{ such that }\ R_1<|x|<R_2\right\}
$$
for some $0<R_1<R_2$ and  invariant under the action of a group $\Gamma$ of orthogonal transformations of $\R^n$.
Under these assumptions Clapp and Pacella showed existence in the following two cases:  
if $R_1/R_2$ is arbitrary and the minimal $\Gamma$-orbit of $\Omega$ is large enough, 
or  if  $R_1/R_2$ is small enough and $\Gamma$ is arbitrary.
The multiplicity results of \cite{ClappPacella08} are obtained by using the  invariance of \eqref{lap} under 
the group of M\"obius transformations (see section 2 in \cite{ClappPacella08}).

For the square root $A_{1/2}$ of the Laplacian, Tan established in \cite{Tan09} the nonexistence of 
classical solutions to \eqref{half} for star-shaped domains. In addition,  Tan also showed  in \cite{Tan09} 
a  Brezis-Nirenberg type result for nonlinearities of the form $f(u)=|u|^{2^\sharp-2}u+\mu u$, $\mu>0$.
In \cite{CabreTan09}, Cabre and Tan studied  existence, regularity  and symmetry results for problem 
\eqref{half} with power nonlinearities. In particular they showed  a nonexistence Liouville result for  
{\it bounded} solutions of  \eqref{half} in  $\R^n$, or in $\R^n_+$. 
The corresponding result for unbounded solutions, that is know for \eqref{lap},  is still open. 

The aim of the present paper is to prove existence of solutions for \eqref{half}. To this end we apply, 
adapted to our context,  the variational principle for sign changing solutions developed in  \cite{ClappPacella08}.
Using this method we  obtain multiplicity results for \eqref{half} similar to the ones given by  Clapp and Pacella.
This variational principle is based on standard variational methods combined with symmetry  
assumptions to increase the energy interval in which the Palais-Smale condition holds. 
At the core of the argument are some model functions in low energy finite dimensional spaces, 
which allow to produce multiple solutions.  

Now, we state our hypothesis on the domain. 
As in \cite{ClappPacella08}, we assume that  $\Omega$ is annular-shaped and invariant under the action of a closed subgroup  $\Gamma$ of $O(n)$, that is, of the orthogonal transformations of $\R^n$. We denote by $\Gamma x:=\{\gamma x\ :\  \gamma\in \Gamma\}$ the  $\Gamma$-orbit of $x\in\R^n$, by $\text{\#}\Gamma x$ its cardinality, and 
let   
$$
\ell=\ell(\Gamma):=\min\{\text{\#}\Gamma\ \text{ such that }\ x\in\R^n\setminus\{0\}\}.
$$
We say that $\Omega$ is $\Gamma$-invariant if $\Gamma x\subset \Omega$ for every $x\in\Omega$.
A function $u:\Omega\to\R$ is $\Gamma$-invariant if it is constant along every $\Gamma$-orbit. 

Now  we state our main results:

\begin{theorem} \label{main_half1}
Given $0<R_1<R_2$ and $m\in\mathbb{N}$, there exist a positive integer 
$\ell_0$, depending on $m$ and $R_2/R_1$ such that,  for every closed subgroup $\Gamma$ of 
$O(n)$ with $\ell(\Gamma)>\ell_0$ and every $\Gamma$-invariant domain $\Omega$ with 
$$
0\notin \Omega\quad\text{and}\quad \{x\in \R^n \ : \ R_1<|x|< R_2\}\subset \Omega,
$$
problem \eqref{half} has at least one positive $\Gamma$-invariant solution $u_1$ and $m-1$ distinct 
 pairs of $\Gamma$-invariant sign changing solutions $\pm u_2,\dots,\pm u_m$.
\end{theorem}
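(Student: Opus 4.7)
The plan is to follow the strategy of Clapp--Pacella \cite{ClappPacella08}, adapted to the nonlocal setting via the localization of the half-Laplacian. First I would rewrite \eqref{half} as a local boundary reaction problem on the half-cylinder $\mathcal{C}_\Omega = \Omega\times(0,\infty)$: solutions of \eqref{half} correspond to traces on $\Omega\times\{0\}$ of minimizers/critical points of the energy
\begin{equation*}
J(w)=\frac{1}{2}\int_{\mathcal{C}_\Omega}|\nabla w|^2\,dxdy-\frac{1}{2^\sharp}\int_{\Omega}|w(\cdot,0)|^{2^\sharp}\,dx
\end{equation*}
on the Hilbert space $H$ of functions vanishing on the lateral boundary $\partial\Omega\times[0,\infty)$ (this is the Cabré--Tan extension framework recalled in the excerpt). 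Because $\Gamma$ acts on $\Omega$ by orthogonal transformations, this action lifts to $\mathcal{C}_\Omega$ (trivially on the $y$-variable) and $J$ is invariant on the closed subspace $H^\Gamma$ of $\Gamma$-invariant functions, so by the principle of symmetric criticality it suffices to find critical points of $J|_{H^\Gamma}$.

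Next I would establish the crucial compactness improvement produced by the symmetry. The standard concentration--compactness analysis for the trace-Sobolev embedding shows that $J$ satisfies the Palais--Smale condition below the threshold $\tfrac{1}{2n}S^{n}$, where $S$ is the best constant in the trace embedding; bubbles concentrate at a single point and carry energy $\tfrac{1}{2n}S^{n}$. On $H^\Gamma$, however, a concentrating sequence must concentrate simultaneously at every point of a $\Gamma$-orbit, so each bubble contributes at least $\tfrac{\#\Gamma x}{2n}S^{n}\geq\tfrac{\ell(\Gamma)}{2n}S^{n}$. Thus $J|_{H^\Gamma}$ satisfies $(PS)_c$ for every $c<\tfrac{\ell(\Gamma)}{2n}S^{n}$, and this threshold can be made arbitrarily high by choosing $\ell(\Gamma)$ large. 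Once this is in hand, a positive $\Gamma$-invariant solution $u_1$ is obtained by a mountain-pass argument on the positive cone, provided one exhibits a $\Gamma$-invariant test function with energy below the threshold (the first key test-function estimate).

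To get the additional sign-changing pairs $\pm u_2,\dots,\pm u_m$, I would invoke the Clapp--Pacella symmetric variational principle, which for a $\mathbb{Z}/2$-equivariant $C^1$ functional on a Hilbert space produces $m$ critical pairs provided one can exhibit a continuous odd map $\sigma\colon S^{m-1}\to H^\Gamma$ such that $\max_{\xi\in S^{m-1}} J(\sigma(\xi))$ lies strictly below the P--S threshold. Here I would construct $\sigma$ by taking $m$ points $x_1,\dots,x_m$ lying on distinct spheres $\{|x|=r_i\}$ inside $A_{R_1,R_2}\subset\Omega$, forming truncated Aubin--Talenti-type bubbles $U_{x_i}$ adapted to the trace problem, symmetrizing each one under $\Gamma$, and defining
\begin{equation*}
\sigma(\xi_1,\dots,\xi_m)=\sum_{i=1}^m \xi_i\,\widetilde U_{x_i},\qquad (\xi_1,\dots,\xi_m)\in S^{m-1}.
\end{equation*}
The annular geometry ensures the orbits $\Gamma x_i$ stay uniformly away from $\partial\Omega$ and from the origin, and by choosing the concentration parameters so that the $\Gamma$-orbits of distinct bubbles are well separated, the interaction terms between different $\widetilde U_{x_i}$ are controlled; the maximum of $J\circ\sigma$ is then bounded by $\tfrac{\#\Gamma x^*}{2n}S^{n}+o(1)$ where $x^*$ realizes the minimum orbit size, which one can force to lie below $\tfrac{\ell_0+1}{2n}S^{n}$ by taking $\ell_0$ large depending only on $m$ and $R_2/R_1$.

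The main obstacle I anticipate is the last step: carrying out the test-function estimate precisely enough in the nonlocal (extension) setting, because the standard Aubin--Talenti bubbles for the half-Laplacian on $\R^n$ must be cut off inside the annulus and because interaction terms between bubbles placed on different $\Gamma$-orbits have to be estimated through the harmonic extension into $\mathcal{C}_\Omega$, not directly on $\Omega$. Making the dependence $\ell_0=\ell_0(m,R_2/R_1)$ explicit requires quantifying both the decay of the extended bubbles in the $y$-direction and the geometric constraint that $m$ concentric spheres with well-separated $\Gamma$-orbits fit inside $A_{R_1,R_2}$; this is where the analysis genuinely differs from \cite{ClappPacella08} and where most of the work will lie.
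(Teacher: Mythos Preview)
Your overall architecture is right: the Cabr\'e--Tan extension, symmetric criticality, the raised Palais--Smale threshold $\ell(\Gamma)c_\infty$ on $H^\Gamma$, and the Clapp--Pacella sign-changing variational principle are exactly what the paper uses. The gap is in your test-function construction. A $\Gamma$-symmetrized bubble $\widetilde U_{x_i}$ carries energy approximately $\#\Gamma x_i\cdot c_\infty\ge \ell\, c_\infty$, because it consists of $\#\Gamma x_i$ essentially disjoint standard bubbles. Hence for the $m$-dimensional span $W=\mathrm{span}\{\widetilde U_{x_1},\dots,\widetilde U_{x_m}\}$ one gets $\sup_W J\approx \sum_i \#\Gamma x_i\cdot c_\infty\ge m\ell\, c_\infty$, not $\ell\, c_\infty$ as you claim; this is above, not below, the compactness threshold, and no choice of $\ell_0$ repairs it since both sides scale with $\ell$. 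The estimate ``$\max J\circ\sigma\le \tfrac{\#\Gamma x^*}{2n}S^n+o(1)$'' is therefore incorrect.

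The paper avoids this by choosing test functions whose energy is \emph{independent of $\Gamma$}: radial functions. It splits the annulus $A_{R_1,R_2}$ into $m$ concentric shells $A_{P_{i-1},P_i}$ with $P_i=(R_2/R_1)^{i/m}R_1$, and on each shell takes the positive radial minimizer $\omega_i$ of $J$ on the radial Nehari manifold (this minimizer exists because the radial problem on an annulus is compact). By dilation invariance each $\omega_i$ has the same energy $c(R_1^{1/m},R_2^{1/m})$, a number depending only on $R_2/R_1$ and $m$. Since radial functions are automatically $\Gamma$-invariant for every $\Gamma\subset O(n)$, one sets $W_k=\mathrm{span}\{\omega_1,\dots,\omega_k\}$ and obtains $\sup_{W_m}J\le m\,c(R_1^{1/m},R_2^{1/m})=:\ell_0 c_\infty$, which is below $\ell c_\infty$ once $\ell>\ell_0$. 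This also dissolves the ``main obstacle'' you anticipated: no delicate interaction estimates between extended bubbles are needed at all.
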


In our second main result we consider the existence of solutions for domains with a small  hole. 

\begin{theorem}\label{main_half2}
Given $\delta>0$ there exist $R_\delta$ such that: for every closed subgroup $\Gamma$ of $O(n)$ with 
$\ell=\ell(\Gamma)\ge 2$ and every $\Gamma$-invariant domain $\Omega$ such that  
$$
0\notin \Omega\quad\text{and}\quad \{x\in \R^n \ : \ R_1<|x|< R_2\}\subset \Omega
$$
and $$
0<R_1/R_2 <R_\delta,
$$
problem \eqref{half} has at least one positive $\Gamma$-invariant solution $u_1$ and $\ell$ pairs of 
distinct $\Gamma$-invariant sign changing solutions $\pm u_2,\dots,\pm u_{\ell+1}$.
\end{theorem}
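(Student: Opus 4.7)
The plan is first to reformulate \eqref{half} locally via the Cabré--Tan extension \cite{CabreTan09}: a function $u\in H^{1/2}_{00}(\Omega)$ solves \eqref{half} iff its harmonic extension $v$ to the half-cylinder $\C=\Omega\times(0,\infty)$, vanishing on the lateral boundary $\partial\Omega\times[0,\infty)$, is a critical point of
$$
J(v)=\frac{1}{2}\int_{\C}|\nabla v|^{2}\,dxdy-\frac{1}{2^{\sharp}}\int_{\Omega}|v(x,0)|^{2^{\sharp}}\,dx.
$$
The isometries of $\Gamma$ act on $\C$ by acting on the $x$-variable, so the $\Gamma$-invariant subspace $H_\Gamma$ is well defined and, by the principle of symmetric criticality, critical points of $J|_{H_\Gamma}$ restricted to the Nehari manifold $\N_\Gamma=\{v\in H_\Gamma\setminus\{0\}:\langle J'(v),v\rangle=0\}$ yield $\Gamma$-invariant solutions of \eqref{half}. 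The rest of the argument is an equivariant min-max on $\N_\Gamma$ following the scheme of Clapp--Pacella \cite{ClappPacella08}.

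\textbf{Enhanced Palais--Smale threshold.} I would next run a concentration-compactness analysis adapted to the boundary-reaction functional. Because $0\notin\Omega$, the $\Gamma$-action on $\Omega$ is free, so any concentration point produces an entire $\Gamma$-orbit of concentration points with identical bubble profiles. Summing the Sobolev quota of each bubble, one obtains that $J|_{\N_\Gamma}$ satisfies $(PS)_{c}$ for every
$$
c<\frac{\ell}{2n}\,S^{\,n},
$$
where $S$ is the best constant in the trace-Sobolev embedding $H^{1}_{0,L}(\R^{n}\times\R_{+})\hookrightarrow L^{2^{\sharp}}(\R^{n}\times\{0\})$. This is the quantitative gain provided by the symmetry.

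\textbf{Test functions for small holes.} The core of Theorem \ref{main_half2} is to produce $\ell+1$ min-max levels strictly below $\frac{\ell}{2n}S^{n}$. For this, fix any orbit $\Gamma\xi$ of cardinality $\ell$ contained in the annulus $\{R_1<|x|<R_2\}$ and, for the Cabré--Tan half-bubble $W_{\varepsilon,\zeta}$ concentrated at $(\zeta,0)$, consider the sign-weighted sums
$$
v_{\sigma,\varepsilon}=\sum_{\zeta\in\Gamma\xi}\sigma(\zeta)\,W_{\varepsilon,\zeta},
$$
where $\sigma$ ranges over $\{\pm 1\}^{\Gamma\xi}$ modulo a $\Gamma$-equivariance condition. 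Standard bubble-interaction estimates in the nonlocal setting give
$$
\max_{t\ge 0}J(tv_{\sigma,\varepsilon})\leq \frac{\ell}{2n}S^{n}-\kappa\,\varepsilon^{n-1}+o(\varepsilon^{n-1}),
$$
provided the negative interaction dominates the loss coming from localising each bubble away from $\partial\Omega$ and from the hole at the origin. Exploiting scaling invariance of the extension problem in $\R^{n}\times\R_{+}$, rescaling the annulus to $\{1<|x|<R_{2}/R_{1}\}$ shows that this loss term is controlled by a positive power of $R_{1}/R_{2}$, which yields the required threshold $R_{\delta}$.

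\textbf{Min-max extraction and main difficulty.} With these test configurations I would apply a $\mathbb Z_{2}$-equivariant genus / linking scheme on $\N_\Gamma$, exactly in the variational framework of \cite{ClappPacella08}: the positive solution $u_1$ arises from minimisation on the positive Nehari cone, while the sign-changing pairs $\pm u_{2},\dots,\pm u_{\ell+1}$ come from $\ell$ distinct nontrivial sign patterns $\sigma$, each producing a separate min-max level below $\frac{\ell}{2n}S^{n}$. The main obstacle is the step where $R_{\delta}$ is chosen: unlike the local problem \eqref{lap}, where Clapp--Pacella exploit the full Möbius invariance of $-\Delta$ with the critical power, here one only has the scaling/conformal invariance of the extended trace-Sobolev problem on the half-space. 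Replacing Möbius transformations by this weaker invariance and quantifying the nonlocal interaction integrals between Cabré--Tan half-bubbles is the technical heart of the proof; once these interaction estimates are in place, the energy separation of the $\ell+1$ sign patterns and the Palais--Smale threshold above deliver the conclusion.
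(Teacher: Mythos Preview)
Your overall framework (Cabr\'e--Tan extension, symmetric criticality, Nehari manifold) matches the paper, but the core of your argument has two genuine gaps.

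\medskip
\textbf{The test functions are not $\Gamma$-invariant.} You place half-bubbles at the points of a single $\Gamma$-orbit $\Gamma\xi$ and attach signs $\sigma(\zeta)\in\{\pm1\}$. But $\Gamma$ acts transitively on $\Gamma\xi$, so the only $\Gamma$-invariant sign assignment is the constant one. Thus every $\Gamma$-invariant $v_{\sigma,\varepsilon}$ is $\pm$ the all-positive sum; there are no ``$\ell$ distinct nontrivial sign patterns'' available inside $H_\Gamma$. Your construction therefore produces at most a one-dimensional family of $\Gamma$-invariant test configurations, which cannot feed a genus argument yielding $\ell$ sign-changing pairs.

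\medskip
\textbf{The Palais--Smale threshold is too low.} To extract $\ell$ sign-changing pairs via the variational principle of \cite{ClappPacella08} (Theorem~\ref{mpt} here) one needs an $(\ell+1)$-dimensional subspace $W$ with $\sup_{W}J_\Omega$ below the compactness threshold. An $(\ell+1)$-dimensional space built from nearly-optimal profiles carries energy close to $(\ell+1)c_\infty$, not $\ell c_\infty$. The paper handles this by proving a \emph{relative} Palais--Smale condition (Lemma~\ref{cpt2}): for $\ell\ge2$ there exists $\varepsilon_0>0$ such that $J_\Omega$ satisfies $(PS)_c$ \emph{relative to} $\D_0^\Gamma=B_\alpha({\cal P}^\Gamma)\cup B_\alpha(-{\cal P}^\Gamma)\cup J^0$ for every $c<(\ell+1)c_\infty+\varepsilon_0$. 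Your threshold $\ell c_\infty$ would suffice for the positive solution but not for the full multiplicity statement.

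\medskip
\textbf{What the paper actually does.} The test functions are not bubbles on an orbit but \emph{radial} minimizers $\omega_1,\dots,\omega_{\ell+1}$ supported in disjoint concentric half-cylindrical shells $\A_{P_{i-1},P_i}$, obtained from a single radial minimizer by iterated dilation (Lemma~\ref{pos}). Being radial, they are automatically $\Gamma$-invariant for any $\Gamma\subset O(n)$, and having disjoint supports they span an $(\ell+1)$-dimensional subspace $W_{\ell+1}$ with
$$
\max_{W_{\ell+1}}J_\Omega\le(\ell+1)\,c\bigl(R_1^{1/(\ell+1)},R_2^{1/(\ell+1)}\bigr).
$$
Dilation invariance gives $c(R_1,R_2)=c(R_1/R_2,1)$ and $c(R,1)\to c_\infty$ as $R\to0$, so for $R_1/R_2$ small enough the right-hand side is below $(\ell+1)c_\infty+\varepsilon_0$. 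Then Lemma~\ref{cpt} gives the positive solution, and Theorem~\ref{mpt} combined with Lemma~\ref{cpt2} yields the $\ell$ sign-changing pairs. No bubble-interaction estimates are needed; the role of M\"obius invariance in \cite{ClappPacella08} is played here purely by the dilation $\phi(x,y)=\lambda(x,y)$ mapping one shell to the next.
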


The proofs of Theorem~\ref{main_half1} and Theorem~\ref{main_half2} are based on the following 
result  proved by Cabre and Tan \cite{CabreTan09}:  the nonlocal  problem \eqref{half} 
can be realized through a local problem in one more dimension.
More precisely, denote the half-cylinder 
$$\C=\Omega\times(0,\infty)$$  
and its lateral boundary 
$$\partial_L\C=\partial\Omega\times(0,\infty).$$
Then, if $u$ is a function defined in $\Omega$, consider its harmonic extension $v$  in $\C$ 
with $v$ vanishing on $\partial_L\C$, then $A_{1/2}$ is given by the Dirichlet to Newmann map 
on $\Omega$,  $u\mapsto\frac{\partial u}{\partial\nu}\big|_{\Omega\times\{0\}}$ of such harmonic 
extension over the cylinder $\C$.  
Therefore, instead of \eqref{half} we are lead to consider the 
following mixed boundary value problem  
\begin{equation}\label{ext}
\left\{
\begin{array}{lll}
-\Delta v = 0&\text{in }& \C,\\
v=0&\text{on }&\partial_L\C, \\
\frac{\partial v}{\partial\nu}  =  |u|^{2^\sharp-2}u &\text{on } &\Omega\times\{0\},
\end{array}
\right.
\end{equation}
where $\nu$ is the unit outher normal to $\Omega\times\{0\}$. 
If $v$ satisfies \eqref{ext} then the trace $u$  on $\Omega\times\{0\}$ of  $v$  is a solution of \eqref{half}. 
As natural space for solutions of \eqref{ext} we consider 
$$
H^1_{0,L}(\C):=\left\{v\in H^1(\C)\ \text{ such that }\ v=0 \text{ a.e.  on }\ \partial_L\C\right\}.
$$
We denote by $Tr_\Omega$ the trace operator on $\Omega\times\{0\}$ for functions in $H^1_{0,L}(\C)$, and 
consistently use the notation
$$
u=Tr_\Omega(v)\quad\text{for }\  H^1_{0,L}(\C).
$$
For further details on this representation and the involved functional spaces we refer to \cite{CabreTan09}. 

Now we have to explain how we translate our assumptions on the domain $\Omega$ to the extended domain $\C$. 
First,  we say that $\C$ is an annular-shaped cylinder, if 
$$
0\notin \C\quad\text{and}\quad \C\supset {\cal A}_{R_1,R_2}:=\{(x,y)\in \R^{n+1}_+ \ : \ R_1<|x|< R_2, y\in[0,\infty)\}
$$
for some $0<R_1<R_2$. Second, we say that a closed subgroup $\Gamma$ of $O(n)$ acts on the base of  
$$\R^{n+1}_+:=\{(x,y)\subset \R^{n+1}\ \text{such that } \ x\in \R^n, x\in (0,\infty)\},$$ 
if  for $\gamma\in \Gamma$ and $(x,y)\in \R^{n+1}_+$ 
$$
\gamma(x, y)=(\gamma x, y).
$$
Hence,  $\Gamma (x,y):=\left(\{\gamma x\ \text{ s.t. }\  \gamma\in \Gamma\},y\right)$ denote its  $\Gamma$-orbit
and  $\text{\#}\Gamma (x,y)=\text{\#}\Gamma x$ its  cardinality. Finally,   we let
$$
\ell=\ell(\Gamma):=\min\{\text{\#}\Gamma\ \text{ s.t. }\  (x,y) \in\R^{n+1}_+\setminus\{0\}\}.
$$
We say  that $\C$ is  $\Gamma$-invariant if $\Gamma (x,y)\subset\C$ for every $(x,y)\in \C$, that is, 
$\C$ is invariant under $\Gamma$ acting on the base of the cylinder. As before, a function $v$ is said to be 
$\Gamma$-invariant if it is constant on every $\Gamma$-orbit.  

Thus, Theorem~\ref{main_half1} and Theorem~\ref{main_half2} are corollaries 
of the following results:

\begin{theorem} \label{main}
Given $0<R_1<R_2$ and $m\in\mathbb{N}$, there exist a positive integer 
$\ell_0$, depending on $m$ and $R_2/R_1$ such that,  for every closed subgroup $\Gamma$ of 
$O(n)$ with $\ell(\Gamma)>\ell_0$ and every $\Gamma$-invariant and annular-shaped cylinder 
domain $\C$ with 
$$
0\notin \C\quad\text{and}\quad \{(x,y)\in \R^{n+1}_+ \ : \ R_1<|x|< R_2, y\in[0,\infty)\}\subset \C,
$$
problem \eqref{ext} has at least one positive $\Gamma$-invariant solution $v_1$ and $m-1$ distinct 
 pairs of $\Gamma$-invariant sign changing solutions $\pm v_2,\dots,\pm v_m$.
\end{theorem}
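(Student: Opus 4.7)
The plan is to mirror the Clapp--Pacella variational principle from \cite{ClappPacella08} in the cylinder framework of \eqref{ext}. I would work on the Hilbert space $H := H^1_{0,L}(\C)^{\Gamma}$ of $\Gamma$-invariant functions equipped with $\|v\|^{2}=\int_{\C}|\nabla v|^{2}$, and the energy functional
$$
J(v)=\frac{1}{2}\int_{\C}|\nabla v|^{2}\,dxdy-\frac{1}{2^{\sharp}}\int_{\Omega}|Tr_{\Omega}(v)|^{2^{\sharp}}\,dx.
$$
By the principle of symmetric criticality, critical points of $J|_{H}$ are $\Gamma$-invariant solutions of \eqref{ext}. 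It is natural to constrain to the Nehari manifold $\N\subset H$ of nontrivial candidates and to exploit the $\mathbb{Z}/2$-symmetry $v\mapsto -v$.

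The first step is to adapt Struwe's global compactness to the mixed boundary value problem \eqref{ext}, using the Liouville-type result and classification from \cite{CabreTan09}: every Palais--Smale sequence in $H$ decomposes, up to a subsequence, as a weak limit plus a finite sum of rescaled bubbles that are nontrivial solutions on the half-space $\R^{n+1}_{+}$. Without symmetry, the first failure of PS occurs at the single-bubble energy $c_{\infty}:=\tfrac{1}{2n}S^{n}$, with $S$ the trace-Sobolev constant. Under $\Gamma$-invariance, a bubble centered at $x_{0}\ne 0$ must be replicated along its full $\Gamma$-orbit, forcing each failure level to be a multiple of $\ell(\Gamma)\,c_{\infty}$. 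Choosing $\ell_{0}$ sufficiently large (in terms of $m$ and $R_{2}/R_{1}$) then guarantees that PS holds up to any prescribed threshold.

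Next I would construct an $m$-parameter family of model functions. Subdivide the annulus $\A_{R_{1},R_{2}}$ into $m$ disjoint thinner annular shells along a geometric progression $R_{1}=r_{0}<r_{1}<\dots<r_{m}=R_{2}$ (adapted to the conformal scaling of bubbles). On each shell, take the extremal bubble for the trace-Sobolev inequality on $\R^{n+1}_{+}$, cut it off so its support lies in the corresponding shell cross $[0,\infty)$, and symmetrize over $\Gamma$. The resulting $\Gamma$-invariant functions $w_{1},\dots,w_{m}$ have pairwise disjoint supports in $\C$. A careful energy computation gives $\sup_{\N\cap V}J<(\ell_{0}+1)\,c_{\infty}$ on $V:=\mathrm{span}(w_{1},\dots,w_{m})$, which lies below the symmetric PS-threshold. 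Finally, apply an equivariant Lusternik--Schnirelmann / Krasnoselskii-genus min-max on $\N$ to extract $m$ distinct critical levels of $J|_{\N}$ inside $V$. The lowest level provides a ground state, which can be taken positive by the identity $J(|v|)=J(v)$ on $\N$ together with the strong maximum principle for \eqref{ext}; the remaining $m-1$ levels produce pairs $\pm v_{k}$ of $\Gamma$-invariant sign-changing solutions.

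The hard part will be the concentration-compactness analysis adapted to the nonlocal setting: proving that every $\Gamma$-invariant PS sequence either converges strongly or exhibits bubbles that appear in \emph{complete} $\Gamma$-orbits. This demands a blow-up analysis at interior points of $\Omega\times\{0\}$, at lateral-boundary points of $\partial_{L}\C$ (handled by reflection), and an exclusion of bubbles concentrating near the excluded origin, all leaning on the Liouville theorem in \cite{CabreTan09} to rule out parasitic limiting profiles. Almost as delicate is the sharp bound on $\sup_{\N\cap V}J$, which must be tight enough that $m$ cutoff bubbles distributed over $m$ disjoint shells, each replicated over an $\ell(\Gamma)$-orbit, remain strictly below the symmetric PS-threshold.
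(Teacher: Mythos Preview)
Your overall architecture matches the paper's: restrict $J$ to $H^1_{0,L}(\C)^\Gamma$, establish a Palais--Smale condition below $\ell(\Gamma)\,c_\infty$ via a Struwe-type decomposition in which bubbles must appear in full $\Gamma$-orbits, and then run a min--max of Clapp--Pacella type over an $m$-dimensional subspace of test functions with disjoint supports.

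There is, however, a genuine gap in your choice of test functions. If you take a cut-off Escobar bubble centered at a point $x_0$ in one of the shells and then symmetrize over $\Gamma$, the resulting $\Gamma$-invariant function is essentially a sum of $\ell(\Gamma)$ nearly disjoint bubbles, each of Nehari energy at least $c_\infty$. Hence each $w_i$ carries energy $\gtrsim \ell(\Gamma)\,c_\infty$, and since the $w_i$ have disjoint supports,
\[
\sup_{V} J \;\ge\; \sum_{i=1}^{m}\max_{t>0}J(t w_i)\;\gtrsim\; m\,\ell(\Gamma)\,c_\infty .
\]
For $m\ge 2$ this is never below the symmetric Palais--Smale threshold $\ell(\Gamma)\,c_\infty$, regardless of how large $\ell(\Gamma)$ is; in particular no choice of $\ell_0$ depending only on $m$ and $R_2/R_1$ can rescue the estimate. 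The ``sharp bound on $\sup_{\N\cap V}J$'' that you flag as delicate in fact fails outright with $\Gamma$-symmetrized bubbles.

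The paper avoids this by using \emph{radial} test functions instead. By Lemma~\ref{radial}, each thin annular cylinder $\A_{P_{i-1},P_i}$ carries a positive radial (hence $O(n)$-invariant, and therefore automatically $\Gamma$-invariant for every $\Gamma\subset O(n)$) solution $\omega_i$; the dilation invariance (Lemma~\ref{pos}) makes all their energies equal to the fixed number $c(R_1^{1/m},R_2^{1/m})$, which depends only on $m$ and $R_2/R_1$ and \emph{not} on $\Gamma$. Setting
\[
\ell_0:=\frac{m\,c(R_1^{1/m},R_2^{1/m})}{c_\infty}
\]
then gives $\sup_{W_m}J_\Omega\le m\,c(R_1^{1/m},R_2^{1/m})=\ell_0\,c_\infty<\ell(\Gamma)\,c_\infty$ whenever $\ell(\Gamma)>\ell_0$, and Lemma~\ref{cpt} together with Theorem~\ref{mpt} finishes the argument. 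The point is that radiality furnishes $\Gamma$-invariance \emph{for free}, without the $\ell(\Gamma)$-fold energy penalty incurred by orbit-symmetrizing a localized bubble.
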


Regarding the result of domains with a small hole we have:

\begin{theorem}\label{main2}
Given $\delta>0$ there exist $R_\delta$ such that: for every closed subgroup $\Gamma$ of $O(n)$ with 
$\ell=\ell(\Gamma)\ge 2$ and every $\Gamma$-invariant domain $\C$ such that  
$$
0\notin \C\quad\text{and}\quad \{(x,y)\in \R^{n+1}_+ \ : \ R_1<|x|< R_2, y\in[0,\infty)\}\subset \C
$$
and $$
0<R_1/R_2 <R_\delta,
$$
problem \eqref{ext} has at least one positive $\Gamma$-invariant solution $v_1$ and $\ell$ pairs of 
distinct $\Gamma$-invariant sign changing solutions $\pm v_2,\dots,\pm v_{\ell+1}$.
\end{theorem}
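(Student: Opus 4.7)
My plan is to adapt the variational principle of Clapp--Pacella to the extension problem \eqref{ext}. The natural energy functional is
$$
J(v) \;=\; \frac{1}{2}\int_{\C}|\nabla v|^{2}\,dx\,dy \;-\; \frac{1}{2^{\sharp}}\int_{\Omega}|Tr_{\Omega}(v)|^{2^{\sharp}}\,dx,
$$
defined on the $\Gamma$-invariant subspace $H:=H^{1}_{0,L}(\C)^{\Gamma}$. I will work on the Nehari manifold $\mathcal{N}^{\Gamma}:=\{v\in H\setminus\{0\}:\langle J'(v),v\rangle=0\}$ and invoke the principle of symmetric criticality to obtain solutions of \eqref{ext} from critical points of $J|_{\mathcal{N}^{\Gamma}}$. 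The first (positive) solution $v_{1}$ will be produced by minimizing $J$ on the positive cone in $\mathcal{N}^{\Gamma}$, using that this infimum lies strictly below the first compactness threshold.

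Next, I would establish a Struwe-type profile decomposition for Palais--Smale sequences of $J$ in $H$: bubbles may form only at points of $\overline{\Omega}\times\{0\}$, each carrying an energy equal to $c_{\infty}=\frac{1}{2n}S^{n}$, where $S$ is the best trace-Sobolev constant for the half-Laplacian on $\R^{n+1}_{+}$ (obtained in \cite{CabreTan09,Tan09}). Since the excluded origin forces concentration at interior points, and a $\Gamma$-invariant sequence must concentrate simultaneously at every point of a $\Gamma$-orbit, any nontrivial bubbling in $H$ uses at least $\ell$ bubbles. This gives the key compactness statement: $J$ satisfies $(PS)_{c}$ on $H$ for every energy level
$$
c \;<\; c^{*}_{\Gamma} \;:=\; c_{J}(v_{1}) \,+\, \ell\, c_{\infty},
$$
where $c_{J}(v_{1})$ accounts for the already-captured positive ground state.

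The heart of the argument is to construct $\ell$ additional minimax levels below $c^{*}_{\Gamma}$ using the small-hole hypothesis $R_{1}/R_{2}<R_{\delta}$. The idea is to take $\ell+1$ points $\xi_{0},\dots,\xi_{\ell}$ on a sphere of radius $\sqrt{R_{1}R_{2}}$ chosen so that $\{\xi_{1},\dots,\xi_{\ell}\}$ forms a single $\Gamma$-orbit, and to build $\Gamma$-invariant test functions by superimposing suitably rescaled and translated standard bubbles (the extremals of the trace-Sobolev inequality on $\R^{n+1}_{+}$) centered at each point of the orbit, combined linearly with sign-changing coefficients. On each finite-dimensional symplex of such combinations, the Nehari projection yields a continuous odd map into $\mathcal{N}^{\Gamma}$, and an explicit computation of the interaction integrals, analogous to Bahri--Coron and Clapp--Weth, will show that the maximal energy on this symplex is strictly less than $c^{*}_{\Gamma}$ provided $R_{1}/R_{2}$ is sufficiently small (this is where $R_{\delta}$ enters: small hole ensures that individual bubbles can be well-separated inside $\C$ and that the repulsive interaction terms, which would raise the energy, stay controlled by the loss term gained from the multi-bubble structure). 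Applying a pseudo-index or $\Z/2$-equivariant Lusternik--Schnirelmann scheme on $\mathcal{N}^{\Gamma}$ with the odd maps as comparison sets yields $\ell$ distinct critical values of $J$ between $c_{J}(v_{1})$ and $c^{*}_{\Gamma}$, producing the required sign-changing pairs $\pm v_{2},\dots,\pm v_{\ell+1}$.

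The main obstacle I expect is the construction and estimation of the test functions. Controlling the interaction integrals for bubbles on the cylinder $\C$ (rather than on $\R^{n+1}_{+}$) requires careful cutoff arguments at $\partial_{L}\C$ and a quantitative understanding of how the standard bubble profile of Cabr\'e--Tan decays away from its concentration point; the error introduced by the cutoffs must be shown to be absorbed by the strict inequality gained from the small-hole assumption. Once this quantitative bubble analysis is in place, the abstract minimax and equivariant topology arguments proceed along well-established lines parallel to \cite{ClappPacella08}.
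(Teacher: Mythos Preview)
Your outline diverges from the paper in the two places that matter, and in one of them there is a genuine gap.

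\textbf{Test functions.} You propose to build an $(\ell+1)$-dimensional family of $\Gamma$-invariant competitors out of standard trace-Sobolev bubbles centered at points $\xi_0,\dots,\xi_\ell$ with $\{\xi_1,\dots,\xi_\ell\}$ a single $\Gamma$-orbit. This cannot work as stated. Since $\ell(\Gamma)\ge 2$, no nonzero point has a trivial orbit, so $\xi_0$ alone does not produce a $\Gamma$-invariant bubble; and a linear combination $\sum_i t_i U_{\xi_i}$ is $\Gamma$-invariant only if the $t_i$ are constant on orbits, which collapses your $(\ell+1)$-dimensional simplex to a $2$-dimensional one. If instead you symmetrize each bubble over its orbit, every $\Gamma$-invariant building block already carries energy $\approx \ell c_\infty$, and $(\ell+1)$ of them put you far above any usable compactness threshold. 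The Bahri--Coron/Clapp--Weth interaction calculus you sketch does not rescue this, because the obstruction is algebraic (too few $\Gamma$-invariant degrees of freedom at the right energy), not analytic.

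The paper avoids this entirely by using \emph{radial} test functions. By Lemma~\ref{radial} the radial minimum $c(R_1,R_2)$ on an annular cylinder is attained, and by dilation invariance (Lemma~\ref{pos}) one splits $\A_{R_1,R_2}$ into $\ell+1$ nested sub-annuli carrying positive radial minimizers $\omega_1,\dots,\omega_{\ell+1}$ with disjoint supports, each of energy $c(R_1^{1/(\ell+1)},R_2^{1/(\ell+1)})$. Radial functions are automatically $\Gamma$-invariant, so $W_k=\mathrm{span}\{\omega_1,\dots,\omega_k\}\subset H^1_{0,L}(\C)^\Gamma$ has dimension $k$. The small-hole hypothesis enters only through the elementary fact $c(R,1)\to c_\infty$ as $R\to 0$, which gives $\max_{W_{\ell+1}}J_\Omega\le (\ell+1)c(R_1^{1/(\ell+1)},R_2^{1/(\ell+1)})<(\ell+1)c_\infty+\varepsilon_0$. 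No interaction estimates or cutoff error analysis are needed.

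\textbf{Compactness.} Your threshold $c_J(v_1)+\ell c_\infty$ is not the one used. The paper works with Palais--Smale \emph{relative to} $\D^\Gamma_0=B_\alpha({\cal P}^\Gamma)\cup B_\alpha(-{\cal P}^\Gamma)\cup J^0$, and Lemma~\ref{cpt2} gives $(PS)_c$ relative to $\D^\Gamma_0$ for all $c<(\ell+1)c_\infty+\varepsilon_0$. The mechanism is that below this level any non-compact $\Gamma$-invariant PS sequence outside $\D^\Gamma_0$ would resolve into exactly $\ell$ one-signed bubbles, forcing it toward $\pm{\cal P}^\Gamma$ and contradicting $v_m\notin\D^\Gamma_0$. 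Combined with the abstract sign-changing minimax (Theorem~\ref{mpt}, which yields $\dim W-1$ pairs), the radial space $W_{\ell+1}$ delivers the $\ell$ sign-changing pairs; the positive solution comes from $\max_{W_1}J_\Omega<\ell c_\infty$ and Lemma~\ref{cpt}. (Incidentally, here $c_\infty=\tfrac{1}{2n}S_0^{(n-1)/2}$, not $\tfrac{1}{2n}S^n$.)
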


Observe that  the proofs given by Clapp and Pacella in  \cite{ClappPacella08}, 
for the multiplicity results corresponding to Theorems~\ref{main} and Theorem~\ref{main2},  relay on M\"obius invariance and in particular on inversion over spheres. In our context, due to the cylindrical shape of our domains, we can not use 
this inversion. Nevertheless, we manage to carry out the program of \cite{ClappPacella08} by replacing 
inversion over spheres and Kelvin transform by  dilations over cylinders and rescaling.

 The paper is organized as follows: in Section 2 we define the dilation invariance and construct some 
 radially symmetric test functions with controlled energy (Lemma~\ref{pos}).  In Section 3 we prove 
 the variational method for sign changing solutions of \cite{ClappPacella08} adaptaed to our context. 
 In Section 4, we prove two compactness lemmas and Theorem~\ref{main} and Theorem~\ref{main2}.
 Finally, in the appendix we give the proof of Struwe's lemma (Lemma 3.3 Chapter III in \cite{Struwe90})
 adapted to our context.


\section{Dilation invariance and group action}

Let $\lambda>0$ and  $\phi:\R_+^{n+1}\to \R_+^{n+1}$ be the dilation given by 
$\phi(x,y)=(\lambda x, \lambda y)$. This M\"obius transformation maps any cylinder
$\C$ into its rescaled version $\phi(\C)=\lambda\C$. For $v\in H^1_{0,L}(\phi(\C))$ , 
we define $v_\phi\in H^1_{0,L}(\C)$ by
\begin{equation}\label{action}
v_\phi(x,y):=(\det D\phi_x)^\frac{n-1}{2n}v(\phi(x,y)),
\end{equation} 
where $\det D\phi_x$ is the Jacobian determinant of the transformation restricted 
to the $x$ variables, that is  $\phi_x(x)=\lambda x$ and  $\det D\phi_x=\lambda^n$. 
The map $v\mapsto v_\phi$ is a linear isometry of $H^1_{0,L}(\phi(\C))\cong H^1_{0,L}(\C)$ 
and of $L^{2^\sharp}(\phi_x(\Omega))\cong L^{2^\sharp}(\Omega)$, i.e., 
\begin{equation}\label{inv}
\int_\C\nabla v_\phi\cdot \nabla \tilde{v}_\phi\ dxdy
=\int_{\phi(\C)}\nabla v\cdot \nabla \tilde v \ dxdy
\quad \text{and}\quad 
\int_{\Omega}|u_\phi|^{2^\sharp}dx= \int_{\phi_x(\Omega)}|u|^{2^\sharp}dx
\end{equation}

We write 
$$
\Vert v\Vert^2:=\int_\C|\nabla v(x,y)|^2dxdy\quad\text{and}\quad |u|^{2^\sharp}_{2^\sharp}:=\int_\Omega|u|^{2^\sharp}dx
$$
The solutions of \eqref{ext} are critical points of the energy functional $J_\Omega: H^1_{0,L}(\C)\to  \R$ defined by
\begin{equation}\label{enrg}
J_\Omega(v):=\frac{1}{2}\Vert v\Vert^2-\frac{1}{2^\sharp}|u|^{2^\sharp}_{2^\sharp}.
\end{equation}
From the invariance \eqref{inv} it follows that $J_{\Omega}(v_\phi)=J_{\phi(\Omega)}(v)$. This property allow us to construct certain sign changing test functions, that will be important in the proof of our main theorems. 

Let $G$ be a closed subgroup of $O(n)$, and assume that $\C$ is $G$-invariant on the base. 
Notice that in this case  the  orthogonal action of $G$ on $H^1_{0,L}(\C)$ is given 
by $\phi v: =v_{\phi^{-1}}$ for every $\phi\in G$ where $v_{\phi^{-1}}$ is defined as in  \eqref{action}.
Let 
$$
H^1_{0,L}(\C)^G:=\{ v\in H^1_{0,L}(\C)\  \text{ such that } \ \phi v = v \ \text { for all } \phi\in G\}
$$
be the subspace of  $H^1_{0,L}(\C)$ of  $G$-invariant functions.
Clearly,  the functional $J_\Omega$ is $G$-invariant, and by the principle of symmetric 
criticality \cite[Theorem 1.28]{Willem96}, the restriction of $J_\Omega$ to the space $H^1_{0,L}(\C)^G$ 
are solutions of \eqref{ext}. The nontrivial ones belong to the Nehari manifold 
$$
\N(\Omega)^G:=\left\{ v\in H^1_{0,L}(\C)^G\ \text{ such that }\ v\neq 0, \ \Vert v\Vert^2=|u|_{2^\sharp}^{2^\sharp}\right\}.
$$
If $G=\{Id\}$ is the trivial group, then $H^1_{0,L}(\C)^G=H^1_{0,L}(\C)$ and 
$\N(\Omega)^G=\N(\Omega)$ is the usual Nehari manifold. In this case, 
$$
\inf\left\{J_\Omega(v)\ \text{ such that }\ v\in \N(\Omega)\right\}=\frac{1}{2n}S_0^\frac{n-1}{2}=:c_\infty,
$$
where $S_0$ is the best constant in the Sobolev trace inequality \cite{PLLions85} for the embedding 
$\D^{1,2}(\R^{n+1}_+)\hookrightarrow L^{2^\sharp}(\R^n)$, where $\D^{1,2}(\R^{n+1}_+)$ denote the closure 
of the set of smooth functions compactly supported in $\overline{\mathbb{R}^{n+1}_+}$ with respect 
to the norm $\Vert w \Vert^2=\int_{\R^{n+1}_+}|\nabla w(x,y)|^2dxdy$. It is clear that this infimum does not depend 
on $\C$ and it is never attained for bounded $\Omega$. 
It was show in \cite{Escobar88} that this infimum is achieved by functions of the form 
$$
U_\varepsilon(x,y)=\frac{\varepsilon^{(n-1)/2}}{\left(|x-x_0|^2+(y+\varepsilon)^2\right)^\frac{n-1}{2}}, 
$$  
where $x_0\in \R^n$ and $\varepsilon>0$ is arbitrary. In addition the best constant is 
$$
S_0=\frac{(n-1)\sigma_n^{1/n}}{2},
$$
where $\sigma_n$ is the volume of the $n$-dimensional sphere in $\R^{n+1}$.
 If $G$ is nontrivial, the infimum for bounded $\Omega$ might be attained.  For example, 
 if 
 $$
\A=\A_{R_1,R_2}:=\{(x,y)\in \R^{n+1}_+\ : \ x\in A_{R_1,R_2} \ \text{ and } y\in [0,\infty)\}
$$
 where 
$$
A=A_{R_1,R_2}:=\{x\in \R^n\ : \ 0<R_1<|x|<R_2\},
$$
and  $G=O(n)$, then $H^1_{0,L}(\A_{R_1,R_2})$ becomes the space of radial functions and 
$$
c(R_1,R_2):=\inf\left\{J_{A_{R_1,R_2}}(v)\ \text{ such that }\ v\in \N(\A_{R_1,R_2})^{O(n)}\right\}
$$
is always attained.

The next elementary lemma contains  this result.

\begin{lemma}\label{radial}
Let $n\ge 2$, $0<R_1<R_2$ and $\C=\A_{R_1,R_2}$, there exist a radially symmetric, 
 positive, classical solution solution of \eqref{ext}.
\end{lemma}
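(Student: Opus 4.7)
The plan is to minimize $J_\A$ on the $O(n)$-invariant Nehari manifold $\N(\A_{R_1,R_2})^{O(n)}$ and then invoke the principle of symmetric criticality to obtain a critical point of $J_\A$ on all of $H^1_{0,L}(\A_{R_1,R_2})$.

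First I would reduce to a subcritical two-dimensional problem. Every $v\in H^1_{0,L}(\A_{R_1,R_2})^{O(n)}$ is of the form $v(x,y)=V(|x|,y)$, where $V$ is defined on the half-strip $S:=(R_1,R_2)\times(0,\infty)$ with Dirichlet condition at $r=R_1$ and $r=R_2$, and polar coordinates in the $x$-variable give
\begin{equation*}
\Vert v\Vert^2=\sigma_{n-1}\!\int_S\bigl(|\partial_r V|^2+|\partial_y V|^2\bigr)r^{n-1}\,dr\,dy,\qquad |u|^{2^\sharp}_{2^\sharp}=\sigma_{n-1}\!\int_{R_1}^{R_2}|V(r,0)|^{2^\sharp}r^{n-1}\,dr,
\end{equation*}
where $\sigma_{n-1}$ is the area of the unit sphere in $\R^n$. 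Because $0<R_1<R_2$, the weight $r^{n-1}$ is uniformly comparable to a positive constant, so the minimization reduces to a two-dimensional variational problem on $S$. In this reduced setting the trace map from $H^1$ of any bounded sub-strip $(R_1,R_2)\times(0,M)$ onto the bottom $(R_1,R_2)\times\{0\}$ factors through $H^{1/2}$ of a bounded one-dimensional interval, hence is compact into $L^q((R_1,R_2))$ for every finite $q$. In particular the exponent $2^\sharp=2n/(n-1)$ is strictly subcritical for the reduced problem.

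Next I would take a minimizing sequence $\{v_k\}\subset\N(\A_{R_1,R_2})^{O(n)}$ for $c(R_1,R_2)$. Replacing $v_k$ by $|v_k|$ preserves both the Nehari constraint and the energy, so I may assume $v_k\ge 0$. The sequence is bounded in $H^1_{0,L}$, hence extracts a weak limit $v\ge 0$. After truncating to a bounded strip $(R_1,R_2)\times(0,M)$ via a standard tail estimate for $H^1$-bounded sequences, the subcritical compactness above gives $v_k\to v$ strongly in $L^{2^\sharp}(A_{R_1,R_2})$. It follows that $v\in\N(\A_{R_1,R_2})^{O(n)}$ and $J_\A(v)=c(R_1,R_2)$. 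By the principle of symmetric criticality, $v$ is a critical point of $J_\A$ on the full space $H^1_{0,L}(\A_{R_1,R_2})$, and hence a weak solution of \eqref{ext}. The strong maximum principle and Hopf lemma applied to the harmonic extension $v$ (nonnegative and nontrivial) yield positivity in $\C$ and of its trace in $\Omega$, while classical regularity is then a direct consequence of the results of Cabr\'e and Tan \cite{CabreTan09}.

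The main obstacle is the tail control as $y\to\infty$: although the reduced problem is subcritical, the half-strip $S$ is unbounded, so one must rule out escape of mass in the $y$-direction before invoking compactness on a bounded sub-strip. This is straightforward for minimizing sequences because the Dirichlet energy over $(R_1,R_2)\times(M,\infty)$ tends to zero as $M\to\infty$, but it is the only point in the argument that is not purely formal.
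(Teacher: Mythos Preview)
Your argument is correct and reaches the same conclusion as the paper, but the route differs. The paper applies the mountain pass theorem to the truncated functional
\[
J^+_A(v)=\tfrac{1}{2}\Vert v\Vert^2-\tfrac{1}{2^\sharp}|u^+|^{2^\sharp}_{2^\sharp}
\]
restricted to the radial subspace, verifying the Palais--Smale condition directly from the compactness of the trace embedding $Tr_\Omega\big(H^1_{0,L}(\A)\big)^{O(n)}\hookrightarrow L^p(A)$ for all $p<\infty$. You instead minimize on the radial Nehari manifold. Both arguments rest on the same key fact---subcriticality of the reduced one-dimensional trace problem---and both finish via the regularity and maximum principle of Cabr\'e--Tan. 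Your approach is arguably more elementary since it avoids the mountain pass machinery and gives the minimizer of $c(R_1,R_2)$ directly, which is what Lemma~\ref{pos} actually needs; the paper's mountain pass route has the minor advantage that positivity is built into the functional $J^+_A$ from the start.

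One remark on what you call the ``main obstacle'': the tail control in $y$ is in fact unnecessary for the trace compactness you use. The trace operator $H^1_{0,L}(\A)\to H^{1/2}(A)$ is bounded regardless of the unbounded $y$-direction, and for radial functions $H^{1/2}(A)^{O(n)}$ reduces to $H^{1/2}((R_1,R_2))$, which embeds compactly into every $L^q((R_1,R_2))$. So $u_k\to u$ in $L^{2^\sharp}(A)$ follows immediately from weak $H^1_{0,L}$ convergence, without any truncation to a bounded sub-strip. The rest of your argument (non-triviality of the limit, projection back to Nehari if needed, symmetric criticality) goes through as written.
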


\begin{proof}
Consider the energy functional
$$
J^+_A(v):=\frac{1}{2}\Vert v\Vert^2-\frac{1}{2^\sharp}|u^+|^{2^\sharp}_{2^\sharp}
$$ 
restricted to $X=\{v\in H^1_{0,L}(\A)\ : \ u=tr_\Omega (v) \text{ is radially symmetric}\}$ and where as usual 
$u^+:=\max\{0,u\}$.  It is straight forward that $J^+_A$  satisfies the assumptions of the mountain pass theorem. 
Thus,  it only remains to prove the Palais-Smale condition. Let $(v_m)_m\subset X$ be such that 
 $$
 d:=\sup_n J^+_A(v_m) < \infty \quad\text{and}\quad  D J^+_A(v_m)\to 0\quad X',
 $$ 
 where $X'$ denotes the dual space of $X$.
To show that $(v_m)_m$ is bounded, we compute 
\begin{eqnarray*}
o(1)(1+\Vert v_m\Vert)+2d & \ge & 2J_A^+(v_m)-\langle D J^+_A(v_m),v_m\rangle \\
& = & \left(1-\frac{2}{2^\sharp}\right)|u_m|_{2^\sharp}^{2^\sharp}
=\frac{1}{n}|u_m|_{2^\sharp}^{2^\sharp},
\end{eqnarray*}
where $o(1)\to 0$ as $n\to \infty$. Hence, 
\begin{eqnarray*}
\Vert v_m\Vert=2J^+_A(v_m)+\frac{2}{2^\sharp}|u_m|_{2^\sharp}^{2^\sharp}\le 4d+o(1)\Vert v_m\Vert^2
\end{eqnarray*}
and $(v_m)_m$ is bounded. 
Hence, we may assume
$$
v_m\rightharpoonup v\quad\text{in}\ H^1_{0,L}(\A).
$$ 
By the compactness of the embedding $Tr_\Omega (H^1_{0,L}(\A))\subset L^{p}(A)$ for every $1\le p<\infty$ 
in annular domains, we have
\begin{equation}\label{L2shrp_weak}
u_m=Tr_\Omega(v_m)\to u \quad \text{in}\ L^{2^\sharp}(A).
\end{equation}
In turn,  this imply that 
$$
|u_m|^{2^\sharp-2}u_m\to |u|^{2^\sharp-2}u\quad \text{in } L^{2^\sharp/(2^\sharp-1)}(A).
$$
Now, we observe that 
\begin{eqnarray*}
\int_\A|\nabla(v_m-v)|^2dxdy & = &\langle D J^+_A(v_m)-D J^+_A(v),v_m-v\rangle \\
 &&\hspace{2cm}
+\int_A\left(|u_m^+|^{2^\sharp-2}u_m^+-|u^+|^{2^\sharp-2}u^+\right)(u_m-u)\ dx.
\end{eqnarray*}
Clearly,  by weak convergence
$$
\langle D J^+_A(v_m)-D J^+_A(v),v_m-v\rangle\to 0, \quad \text{as } \ m\to \infty,
$$
and from H\"older inequality it follows  
\begin{eqnarray*}
\int_A\left(|u_m^+|^{2^\sharp-2}u_m^+-|u^+|^{2^\sharp-2}u\right)(u_m-u)\ dx&&\\
&&\hspace{-4cm}
\le \left(\int_A\big||u_m^+|^{2^\sharp-2}u_m^+-|u^+|^{2^\sharp-2}u\big|^qdx\right)^{1/q}
\left(\int_A|u-u_m|^{2^\sharp}dx\right)^{1/2^\sharp},
\end{eqnarray*}
where $q=2^\sharp/(2^\sharp-1)$.
Thus,  from \eqref{L2shrp_weak} by letting $m\to \infty$ we conclude that  $\Vert v_m-v\Vert\to 0$. 

Finally, by the regularity theory  (see \cite[Section 3]{CabreTan09})  we find that $v\in  C^{2,\alpha}(\A)$, and the maximum 
principle  (see also \cite[Section 4]{CabreTan09})  implies that $v$ is positive in $\A$.
\end{proof}

In the following lemma, we construct some radially symmetric test functions with controlled energy. 
These functions  will be used in the proof of our main theorems.

\begin{lemma}\label{pos}
Given $0<R_1<R_2$ and $m\in \mathbb{N}$, there exist $R_1=:P_0<P_1<\cdots<P_m:=R_2$ and 
positive radial functions $\omega_1, \dots,\omega_m\in \N(A_{R_1,R_2})^{O(n)}$ such that 
$$
\text{supp}(\omega_i)\subset \A_{P_i,P_{i+1}}\quad \text{ and } \quad J_A(\omega_i)=c(R_1^{1/m},R_2^{1/m}), 
\quad i=1,\dots,m.
$$ 
\end{lemma}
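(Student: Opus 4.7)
The plan is to slice $\A_{R_1,R_2}$ into $m$ concentric annular sub-cylinders, all geometrically similar to one model cylinder, and to produce radial positive solutions with pairwise disjoint supports by rescaling a single radial ground state via the dilation action \eqref{action}. Concretely, I choose the intermediate radii in geometric progression: set $P_i := R_1^{1-i/m}R_2^{i/m}$ for $i=0,1,\dots,m$. Then $P_0=R_1$, $P_m=R_2$, and the ratio $P_i/P_{i-1}=(R_2/R_1)^{1/m}=R_2^{1/m}/R_1^{1/m}$ is independent of $i$, so every sub-cylinder $\A_{P_{i-1},P_i}$ is mapped onto the model cylinder $\A_{R_1^{1/m},R_2^{1/m}}$ by the dilation $\phi_i(x,y)=(\lambda_i x,\lambda_i y)$ with $\lambda_i=R_1^{1/m}/P_{i-1}$.

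Next, I apply Lemma \ref{radial} on the model cylinder $\A_{R_1^{1/m},R_2^{1/m}}$ to obtain a positive radial classical solution. The compact-trace argument used there (the embedding of radial $H^1_{0,L}$-traces into $L^{2^\sharp}$ is compact on annular domains) also shows that the radial Nehari infimum is attained, so upon selecting the minimizer one may assume $J_{A_{R_1^{1/m},R_2^{1/m}}}(W)=c(R_1^{1/m},R_2^{1/m})$ with $W>0$ radial.

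Then I pull $W$ back to the $i$-th sub-cylinder by setting $\omega_i:=W_{\phi_i}$ in the sense of the action \eqref{action}, that is $\omega_i(x,y)=\lambda_i^{(n-1)/2}W(\lambda_i x,\lambda_i y)$, and extend $\omega_i$ by zero to all of $\A_{R_1,R_2}$. Since $W$ vanishes on $\partial_L\A_{R_1^{1/m},R_2^{1/m}}$, its rescaling vanishes on both $|x|=P_{i-1}$ and $|x|=P_i$; hence the extension belongs to $H^1_{0,L}(\A_{R_1,R_2})^{O(n)}$, is positive, radial, and supported exactly in $\A_{P_{i-1},P_i}$, so the $\omega_i$ have pairwise disjoint supports. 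The isometry \eqref{inv}, together with the fact that extension by zero preserves both the Dirichlet norm and the $L^{2^\sharp}$-norm of the trace, yields
$$
\|\omega_i\|^2=\|W\|^2=|Tr_\Omega(W)|_{2^\sharp}^{2^\sharp}=|Tr_\Omega(\omega_i)|_{2^\sharp}^{2^\sharp},
$$
so $\omega_i\in\N(A_{R_1,R_2})^{O(n)}$ and $J_A(\omega_i)=J_{A_{R_1^{1/m},R_2^{1/m}}}(W)=c(R_1^{1/m},R_2^{1/m})$.

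The only genuine subtlety is to verify that the solution delivered by Lemma \ref{radial} can be selected at the ground state level rather than at some higher critical value; this is routine, as for the positive-part functional the mountain-pass critical value coincides with the radial Nehari infimum, and compactness is already in hand in the radial setting. Everything else is bookkeeping of how the dilation action \eqref{action} transforms the $H^1$ and $L^{2^\sharp}$ norms, which is precisely what the $(n-1)/2n$-power normalization of \eqref{action} was designed to achieve.
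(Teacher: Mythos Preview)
Your proof is correct and follows essentially the same approach as the paper: both choose the intermediate radii in geometric progression $P_i=(R_2/R_1)^{i/m}R_1$ and use the dilation isometry \eqref{inv} to transport a single positive radial Nehari minimizer to each sub-annulus. The only cosmetic difference is that the paper starts from a minimizer on the first sub-annulus $\A_{P_0,P_1}$ and iterates the single dilation by $\lambda=(R_2/R_1)^{1/m}$, whereas you fix a model minimizer on $\A_{R_1^{1/m},R_2^{1/m}}$ and pull it back to each $\A_{P_{i-1},P_i}$ via separate dilations $\lambda_i$; these are equivalent rearrangements of the same argument.
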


\begin{proof}
Let $\lambda=(R_2/R_1)^{1/m}$ and define $P_i=\lambda^iR_1$, for $i=1,\dots,m$. Let $\phi$ be the dilation 
by $\lambda$, that is 
$$
\phi(x,y)=\lambda (x,y).
$$
Now, fix a positive radial minimizer $\omega_1$ of $J_A$ on $\N(A_{P_0,P_1})^{O(n)}$ and define 
$$
\omega_{i+1}(x,y):=\lambda^\frac{n-1}{2}\omega_i(\lambda x, \lambda y).
$$

Since $\phi(\A_{P_{i-1},P_{i}})=\A_{P_i,P_{i+1}}$, the invariance by dilations yields that $w_{i+1}$ is a positive 
radial minimizer of $J_A$ on $\N(A_{P_{i},P_{i+1}})^{O(n)}$, with $J_{A_{P_i,P_{i+1}}}(\omega_{i+1})=J_{A_{P_0,P_1}}(\omega_1)=c(P_0,P_1)$. Finally, by rescaling, it follows easily  that $c(P_0,P_1)=c(R_1^{1/m},R_2^{1/m})$.
\end{proof}


\section{A variational principle for sign changing solutions}

In this section we prove a mountain pass lemma for sign changing solutions. 
The results in this section closely follow the ones of \cite[Section 3]{ClappPacella08} adapted to the present  
setting. For completeness we will quote all need results, and where no mayor changes 
are needed we refer to the proof  given in \cite{ClappPacella08}.
\smallskip

Let $G$ be a closed subgroup of $O(n)$ and let $\C=\Omega\times[0,\infty)$ be an 
$G$-invariant on the base subset of $\R^{n+1}_+$. If $v$ is a sign changing $G$ -invariant 
solution of \eqref{ext} it must lie in the set
$$
\E^G:=\{v\in \N(\Omega)^G\ \text{ such that }\  v^+,v^-\in \N(\Omega)^G\}
$$
 where $v^+:=\max\{0,v\}$ and $v^-:=\min\{0,v\}$. Consider the negative gradient flow $\varphi:{\cal G}\to H^1_{0,L}(\C)^G$ of $J_\Omega$, defined by
 $$
 \left\{
 \begin{array}{l}
 \partial_t\varphi(t,v)=-D J_\Omega(\varphi(t,v)))\\
 \\
 \varphi(0,v)=v,
 \end{array}
 \right.
 $$
where ${\cal G}:=\{(t,v),\text{ s.t. } v\in H^1_{0,L}(\C)^G, 0\le t\le T(v)\}$ and $T(v)\in (0,\infty]$ is the maximal existence time of the trajectory $t\mapsto \varphi(t,v)$. We say that a subset $\D$ of $H^1_{0,L}(\C)^G$ is strictly
positive invariant under $\varphi$ if 
$$
\varphi(t,v)\in \text{int}(\D)\quad\text{for every } v\in \D\ \text{and every } t\in (0,T(v))
$$ 
where $\text{int}(\D)$ denotes the interior of $\D$ in $H^1_{0,L}(\C)^G$. If $\D$ is strictly positively invariant under 
$\varphi$, then the set 
$$
{\cal J}(\D):=\{ v\in H^1_{0,L}(\C)^G\text {s.t. } \varphi(t,v)\in \D\text{ for some } t\in (0,T(v))\}
$$
is open in $H^1_{0,L}(\C)^G$, and the time entrance map $\tau_\D:{\cal J}(\D)\to \R$ defined by 
$$
\tau_\D(v):=\inf\{ t\ge 0\text{ s.t. } \varphi(t,v)\in \D\}
$$ 
is continuos. We write ${\cal P}^G:=\{v\in H^1_{0,L}(\C)^G\text{ s.t. } v\ge 0\}$ for the convex cone of positive functions in $H^1_{0,L}(\C)^G$ and, for  $\alpha>0$, we set 
$$
B_\alpha({\cal P}^G):=\{ v\in H^1_{0,L}(\C)\text{ s.t. } \text{dist}(v,{\cal P}^G)\le\alpha\},
$$ 
where $\text{dist}(v,{\cal J}):=\inf_{w\in{\cal J}}\Vert v-w\Vert$.

\begin{lemma}\label{lem:alpha}
There exists $\alpha>0$ such that\\
(a) \ $\left[B_\alpha({\cal P}^G)\cup B_\alpha(-{\cal P}^G)\right]\cap \E^G =\emptyset$ \\
(b) \  $B_\alpha({\cal P}^G)$ and $B_\alpha(-{\cal P}^G)$ are strictly invariant under $\varphi$.
\end{lemma}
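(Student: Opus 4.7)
My plan is to mimic, in the extension setting, the arguments of Clapp--Pacella, in which the cone of nonnegative functions is shown to be a positively invariant neighborhood for the gradient flow by combining the Nehari characterization with the trace--Sobolev inequality. The key ingredients are a uniform lower bound on nontrivial elements of $\mathcal{N}(\Omega)^G$ and the identity $\langle DJ_{\Omega}(v),v^-\rangle=\|v^-\|^{2}-|u^-|_{2^\sharp}^{2^\sharp}$.

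For part (a), I would first observe that any $w\in\mathcal{N}(\Omega)^G$ satisfies, by the trace--Sobolev inequality and the Nehari identity $\|w\|^{2}=|tr(w)|_{2^\sharp}^{2^\sharp}$,
$$
S_{0}\,|tr(w)|_{2^\sharp}^{2}\le\|w\|^{2}=|tr(w)|_{2^\sharp}^{2^\sharp},
$$
so $|tr(w)|_{2^\sharp}\ge S_0^{1/(2^\sharp-2)}$. Now if $v\in\mathcal{E}^G$, then $v^-\in\mathcal{N}(\Omega)^G$, so $|u^-|_{2^\sharp}$ is bounded below by this universal constant. Given any $w\in\mathcal{P}^G$, on the set $\{u<0\}$ one has $tr(v-w)=u-tr(w)\le u<0$, hence $|tr(v-w)|\ge|u^-|$ pointwise there, giving
$$
\|v-w\|^{2}\ge S_{0}\,|tr(v-w)|_{2^\sharp}^{2}\ge S_{0}\,|u^-|_{2^\sharp}^{2}\ge S_0^{\,2^\sharp/(2^\sharp-2)}.
$$
Taking the infimum over $w$ and choosing $\alpha$ strictly smaller than $S_{0}^{2^\sharp/(2(2^\sharp-2))}$ yields $\mathrm{dist}(v,\mathcal{P}^G)>\alpha$. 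Running the symmetric argument for $v^+$ and $-\mathcal{P}^G$ finishes (a).

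For part (b), the key computation is that for every $v\in H^{1}_{0,L}(\mathcal{C})^G$, using $\nabla v^+\cdot\nabla v^-=0$ a.e.,
$$
\langle DJ_{\Omega}(v),v^-\rangle=\|v^-\|^{2}-|u^-|_{2^\sharp}^{2^\sharp}.
$$
By the trace--Sobolev inequality, $|u^-|_{2^\sharp}^{2^\sharp}\le S_{0}^{-2^\sharp/2}\|v^-\|^{2^\sharp}$, so shrinking $\alpha$ if necessary, whenever $\|v^-\|\le 2\alpha$,
$$
\langle DJ_{\Omega}(v),v^-\rangle\ge\tfrac{1}{2}\|v^-\|^{2}.
$$
Since $v^+\in\mathcal{P}^G$, one has the crude bound $\mathrm{dist}(v,\mathcal{P}^G)\le\|v^-\|$. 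I would then consider the Lipschitz function $t\mapsto\|\varphi(t,v)^-\|^{2}$ and show, by smooth truncation of $s\mapsto\min(s,0)$ and passing to the limit, that
$$
\tfrac{d}{dt}\|\varphi(t,v)^-\|^{2}=-2\langle DJ_{\Omega}(\varphi(t,v)),\varphi(t,v)^-\rangle\le-\|\varphi(t,v)^-\|^{2}
$$
at almost every $t$ for which $\|\varphi(t,v)^-\|\le 2\alpha$. Gr\"onwall then gives strict exponential decay of $\|\varphi(t,v)^-\|$, which keeps the trajectory strictly inside $B_{\alpha}(\mathcal{P}^G)$ for all positive $t$.

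The main technical obstacle is the rigorous differentiation of $t\mapsto\|\varphi(t,v)^-\|^{2}$, since $v\mapsto v^-$ is only Lipschitz and not $C^{1}$. This is handled either by the smooth-truncation approximation sketched above or, more intrinsically, by working with the convex distance-to-cone function $\mathrm{dist}(\cdot,\mathcal{P}^G)^{2}$ and its subdifferential; in either case the crucial computational input is the inequality for $\langle DJ_{\Omega}(v),v^-\rangle$ just displayed. The symmetric statement for $B_{\alpha}(-\mathcal{P}^G)$ is obtained by replacing $v$ with $-v$.
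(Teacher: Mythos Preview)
Your argument for part (a) is correct and coincides with the paper's: both bound $\mathrm{dist}(v,\mathcal{P}^G)$ from below by a constant times $|u^-|_{2^\sharp}$ via the trace--Sobolev inequality, and then use the uniform lower bound on $\mathcal{N}(\Omega)^G$.

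For part (b), however, there is a genuine gap. Your Gr\"onwall argument controls $\|\varphi(t,v)^-\|$, and you only relate this to the distance via the one--sided inequality $\mathrm{dist}(v,\mathcal{P}^G)\le\|v^-\|$. The trouble is that the initial hypothesis is $v\in B_\alpha(\mathcal{P}^G)$, i.e.\ $\mathrm{dist}(v,\mathcal{P}^G)\le\alpha$, and in $H^1_{0,L}(\mathcal{C})$ with the Dirichlet inner product this does \emph{not} imply any bound on $\|v^-\|$. Indeed, $v\mapsto v^+$ is \emph{not} the metric projection onto $\mathcal{P}^G$ in this norm (the projection is the solution of an obstacle problem), and one can produce $v\le 0$ and $w\ge 0$ with $\langle v,w\rangle_{H^1}>0$, so that $\mathrm{dist}(v,\mathcal{P}^G)<\|v^-\|$ strictly, with no uniform comparison constant. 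Consequently your differential inequality, which is only valid when $\|\varphi(t,v)^-\|\le 2\alpha$, may fail to hold at $t=0$, and the argument does not start. Your fallback suggestion of working with $\mathrm{dist}(\cdot,\mathcal{P}^G)^2$ is the right object, but then the ``crucial computational input'' is $\langle DJ_\Omega(v),\,v-P_{\mathcal{P}^G}(v)\rangle$, not $\langle DJ_\Omega(v),v^-\rangle$, and your identity no longer applies.

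The paper proceeds differently. It writes $DJ_\Omega(v)=v-K(u)$, where $K(u)\in H^1_{0,L}(\mathcal{C})$ is the harmonic extension with Neumann datum $|u|^{2^\sharp-2}u$. Using the maximum principle for this extension together with the trace--Sobolev inequality, one obtains the superlinear contraction
\[
\mathrm{dist}(K(u),\mathcal{P}^G)\;\le\;S_0^{\,2^\sharp-1}\,\mathrm{dist}(v,\mathcal{P}^G)^{\,2^\sharp-1},
\]
so that for $\alpha$ small, $K$ maps $B_\alpha(\mathcal{P}^G)$ into its interior. Since $B_\alpha(\mathcal{P}^G)$ is closed and convex and the flow is $\dot\varphi=K(u_\varphi)-\varphi$, Deimling's invariance theorem yields positive invariance, and a Mazur separation argument upgrades this to strict positive invariance. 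This route works directly with $\mathrm{dist}(\cdot,\mathcal{P}^G)$ and avoids the $\|v^-\|$--versus--distance mismatch; the maximum principle for $K$ is the key ingredient you are missing.
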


\begin{proof}
Now we give the argument for (a). For every $v\in H^1_{0,L}(\C)^G$, 
\begin{equation}\label{distneg}
|u^-|_{2^\sharp}=\min_{w\in {\cal P}^G}|u-Tr_\Omega(w)|_{2^\sharp}
\le S_0^{-1}\min_{w\in {\cal P}^G}\Vert v-w\Vert=S_0^{-1}\text{dist}(v,{\cal P}^G),
\end{equation}
where $S_0$ is the best constant in the trace-Sobolev embedding $H^1_{0,L}(\R^{n+1}_+) \hookrightarrow L^{2^\sharp}(\R^n)$. Therefore, since $\inf_{\N(\Omega)^G} J_\Omega > 0$, there exist $\alpha$ such that $\text{dist}(v,{\cal P}^G) > \alpha$ for every $v\in \E^G$. Moreover, since $\E^G$ is symmetric with respect to the origin, 
$\text{dist}(v,-{\cal P}^G)=\text{dist}(-v,{\cal P}^G)>\alpha$, and (a) follows.

In order to prove (b), we only need to consider $B_\alpha({\cal P}^G )$. 
The gradient  $DJ_\Omega:H^1_{0,L}(\C)^G\to H^1_{0,L}(\C)^G$ is given by 
$$
DJ_\Omega(v)=v-K(u),
$$
where $K(u)$ is the unique solution to 
$$
\left\{\begin{array}{ll}
-\Delta K(u)=0 & \text{in } \C\\
K(u)=0&\text{on } \partial_L\C\\
\frac{\partial}{\partial\nu} K(u)=|u|^{2^\sharp-2}u&\text{on } \Omega\times\{0\}.
\end{array}
\right.
$$
That is, $K(u)$ is determined by the relation 
$$
\langle K(u),w\rangle = \int_\Omega  |u|^{2^\sharp-2}u\ Tr_\Omega(w)\ dx 
$$
for every $w\in H^1_{0,L}(\C)$. By the maximum principle \cite[Lemma 4.1]{CabreTan09}  
$$
K(u)\in {\cal P}^G\ \text{ if }\ v\in {\cal P}^G, 
$$ 
we recall that $Tr_\Omega(v)=u\ge 0$  for every $v\in {\cal P}^G$.
Let $v\in H^1_{0,L}(\C)^G$ and $w\in {\cal P}^G$ be such that 
$\text{dist}(v,{\cal P}^G)=\Vert v-w\Vert$. 
We find
\begin{eqnarray*}
\text{dist}(K(u),{\cal P}^G) \Vert K(u)^-\Vert \le  \Vert K(u)^-\Vert^2
&=&  \langle K(u),K(u)^-\rangle \\
&= & \int_\Omega  |u|^{2^\sharp-2}u\ Tr_\Omega(K(u)^-)\ dx \\
& \le & \int_\Omega  |u^-|^{2^\sharp-2}u^-\ Tr_\Omega(K(u)^-)\ dx \\
& \le & |u^-|_{2^\sharp}^{2^\sharp-1}|Tr_\Omega(K(u)^-)|_{2^\sharp}\\
&\le& S_0^{2^\sharp-1}\text{dist}(v,{\cal P}^G)^{2^\sharp-1} \Vert K(u)^-\Vert.
\end{eqnarray*}
Hence, 
$$
\text{dist}(K(u),{\cal P}^G) \le S_0^{2^\sharp-1}\text{dist}(v,{\cal P}^G)^{2^\sharp-1}.
$$
Then, given $\nu <1$ there exist an $\alpha_0>0$ such that if $\alpha<\alpha_0$, 
$$
\text{dist}(K(u),{\cal P}^G) \le \nu\  \text{dist}(v,{\cal P}^G)\quad\text{for every} \ v\in B_\alpha({\cal P}^G).
$$
Thus, $K(u)\in \text{int}(B_\alpha({\cal P}^G))$ if $v\in B_\alpha({\cal P}^G)$. 
Since $B_\alpha({\cal P}^G)$ is closed and convex,  Theorem 5.2 in \cite{Deimling77}
implies 
\begin{equation}\label{flux}
v\in B_\alpha({\cal P}^G) \quad \Rightarrow \quad \varphi(t,v)\in B_\alpha({\cal P}^G)\ \text{ for }\ t\in [0, T(v)).\end{equation}
To conclude the proof,  by contradiction we assume that 
there exist $v \in B_\alpha({\cal P}^G)$ and $t\in (0,T(v))$ such that 
$\varphi(t,v)\in \partial B_\alpha({\cal P}^G)$. Mazur's separation theorem 
(see e.g. Theorem 2.219 in \cite{Megginson98}) gives 
the existence of a continuos linear functional $L\in (H^1_{0,L}(\C)^G)'$ and $\beta>0$ 
such that $L(\varphi(t,v))=\beta$ and $L(v) > \beta$. for $v\in \text{int}( B_\alpha({\cal P}^G))$.
It follows
$$
\left. \frac{\partial}{\partial s}\right|_{s=t}L(\varphi(s,v))=L(-DJ(\varphi(t,v)))=L(K(\varphi(t,v)))-\beta >0.
$$
Hence, there exists $\varepsilon>0$ such that $L(\varphi(s,v)) < \beta$ for $s\in (t-\varepsilon,t)$. 
 Thus, $\varphi(s,v)\notin  B_\alpha({\cal P}^G)$ for $s\in (t-\varepsilon, t)$. This contradicts \eqref{flux} and 
 finish the proof. 
\end{proof}

Fix $\alpha>0$ as in Lemma~\ref{lem:alpha}. Then $J_\Omega$ has no sign changing critical points in $B_\alpha({\cal P}^G)\cup B_\alpha(-{\cal P}^G)$. Let $J^d:=\{v\in H^1_{0,L}(\C)^G \text{ s.t. } J_\Omega(v)\le d\}$.

\begin{corollary} If $J_\Omega$ has no changing critical points $v\in H^1_{0,L}(\C)^G$ with $J_\Omega(v)=d$, then 
the set 
$$
\D_d^G:=B_\alpha({\cal P}^G)\cup B_\alpha(-{\cal P}^G)\cup J^d
$$
is strictly positively invariant under $\varphi$, and the map
$$
\varrho:{\cal J}(\D^G_d)\to \D^G_d, \quad \varrho_d(v):=\varphi(e_{\D^G_d}(v),v)
$$ 
is odd and continuous, and satisfies $\varrho_d(v)=v$ for every $v\in \D^G_d$. 
\end{corollary}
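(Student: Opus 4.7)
My plan is to deduce all three assertions from strict positive invariance of $\D_d^G$, and then read off continuity, the identity on $\D_d^G$, and oddness of $\varrho_d$ from general facts about entry-time maps. So the first step is to establish that $\D_d^G$ is strictly positively invariant, splitting into cases by which of the three pieces contains a given $v$. For $v\in B_\alpha({\cal P}^G)$ there is nothing new to do beyond Lemma~\ref{lem:alpha}(b); the case $v\in B_\alpha(-{\cal P}^G)$ reduces to it because $J_\Omega$ is even, so $DJ_\Omega$ is odd and $\varphi(t,-v)=-\varphi(t,v)$.

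The real work is the case $v\in J^d$. Since $\varphi$ is the negative gradient flow, $t\mapsto J_\Omega(\varphi(t,v))$ is nonincreasing and strictly decreasing at every noncritical point. If $J_\Omega(v)<d$, then the trajectory remains in the open set $\{J_\Omega<d\}\subset \mathrm{int}(\D_d^G)$ for all $t\in[0,T(v))$. If $J_\Omega(v)=d$, then either $v\in B_\alpha({\cal P}^G)\cup B_\alpha(-{\cal P}^G)$, which reduces to the first cases, or $v$ lies outside both balls; in the latter situation I claim $v$ cannot be a critical point. Indeed, by hypothesis every critical point at level $d$ is non-sign-changing, so its trace satisfies $u\ge 0$ or $u\le 0$; testing the identity $v=K(u)$ against $v^-$ (respectively $v^+$) as in the proof of Lemma~\ref{lem:alpha}(b) then yields $v\in{\cal P}^G\cup(-{\cal P}^G)\subset B_\alpha({\cal P}^G)\cup B_\alpha(-{\cal P}^G)$, contrary to our assumption. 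Hence $J_\Omega$ strictly decreases along the orbit starting at $v$, placing $\varphi(t,v)$ in $\mathrm{int}(J^d)\subset \mathrm{int}(\D_d^G)$ for every $t\in(0,T(v))$.

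Once strict positive invariance is in hand, the general observation recorded just before Lemma~\ref{lem:alpha} gives that ${\cal J}(\D_d^G)$ is open and $\tau_{\D_d^G}$ is continuous, so $\varrho_d=\varphi(\tau_{\D_d^G}(\cdot),\cdot)$ is continuous, and $\varrho_d(v)=v$ on $\D_d^G$ follows from $\tau_{\D_d^G}(v)=0$ there. Oddness is immediate from the symmetries $\varphi(t,-v)=-\varphi(t,v)$ and $-\D_d^G=\D_d^G$, which together force $\tau_{\D_d^G}(-v)=\tau_{\D_d^G}(v)$ and hence $\varrho_d(-v)=-\varrho_d(v)$. The main obstacle I anticipate is the borderline case $J_\Omega(v)=d$ with $v$ outside $B_\alpha(\pm{\cal P}^G)$: it is there alone that the hypothesis of no sign-changing critical points at level $d$, combined with the $K(u)$-testing characterization of non-sign-changing solutions as elements of ${\cal P}^G\cup(-{\cal P}^G)$, is essential for keeping the flow from stalling at level $d$.
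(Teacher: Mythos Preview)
The paper states this Corollary without proof, treating it as an immediate consequence of Lemma~\ref{lem:alpha}(b) together with the general remarks on strictly positively invariant sets made just before that lemma. Your argument is correct and supplies precisely the details the paper leaves implicit: the case split on the three pieces of $\D_d^G$, the use of the gradient-flow monotonicity on $J^d$, the exclusion of critical points on the boundary level $\{J_\Omega=d\}\setminus B_\alpha(\pm{\cal P}^G)$ via the hypothesis, and the deduction of continuity, oddness, and the retraction property from the continuity of the entry-time map and the evenness of $J_\Omega$.

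One minor simplification: your detour through the trace $u$ and the $v=K(u)$ testing is unnecessary. In the paper's terminology a critical point $v$ is ``sign changing'' when $v^+$ and $v^-$ are both nonzero (equivalently, for critical points, $v\in\E^G$). The hypothesis therefore says directly that any critical point at level $d$ satisfies $v\ge 0$ or $v\le 0$ on $\C$, i.e.\ $v\in{\cal P}^G\cup(-{\cal P}^G)\subset B_\alpha({\cal P}^G)\cup B_\alpha(-{\cal P}^G)$, without passing through $u$. Your argument via $K(u)$ is not wrong, but it proves something you already have by assumption.
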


We will say that a subset ${\cal Y}$ of $H^1_{0,L}(\C)^G$ is symmetric if $-v\in {\cal Y}$ for every $v\in{\cal Y}$.

\begin{definition}
Let $\D$ and ${\cal Y}$ be symmetric subsets of $H^1_{0,L}(\C)^G$. The  genus $g({\cal Y},\D)$ of ${\cal Y}$ relative to $\D$ is defined as the smallest number $m$ such that ${\cal Y}$ can be covered by $m+1$ open symmetric subset $\U_0,\U_1,\dots,\U_m$ of  $H^1_{0,L}(\C)^G$ such that: 
\begin{enumerate}
\item[(i)] ${\cal Y}\cap \D\subset \U_0$ and there exists an odd continuous map $\vartheta_0:\U_0\to \D$ such that 
$\vartheta_0(v)=v$ for $v\in {\cal Y}\cap \D$.
\item[(ii)] there exist odd continuous maps $\vartheta_j:\U_j\to \{1,-1\}$ for every $j=1,\dots,m$. 
\end{enumerate}
If no such cover exists, we define $g({\cal Y},\D)=\infty$.
\end{definition}

If $\D=\emptyset$ we write $g({\cal Y})=g({\cal Y}, \emptyset)$ and as pointed in \cite{ClappPacella08} this is the usual Krasnoselskii genus. 
The set $\D$ is called a symmetric neighborhood retract if there exist a symmetric neighborhood $\U$ of $\D$
in $H^1_{0,L}(\C)^G$ and an odd continuous map $\varrho:\U\to \D$ such that $\varrho(v)=v$ for every  
$v\in \D$.

\begin{definition}
Let $\D\subset{\cal H}$ be subsets of $H^1_{0,L}(\C)$. We say that  $J_\Omega$ satisfies $(PS)_c$ relative to $\D$ in ${\cal H}$, if every sequence $(v_m)_m$ in ${\cal H}$ such that 
$$v_m\notin \D,\quad J_\Omega(v_m)\to c, \quad D J_\Omega(v_m)\to 0,$$
has a convergent subsequence. If $\D=\emptyset$ we simply say that $J_\Omega$ satisfies $(PS)_c$ in ${\cal H}$.
\end{definition}

Set $\D_c^G:=B_\alpha({\cal P}^G)\cup B_\alpha(-{\cal P}^G)\cup J^c$ and define
$$
c_j:=\inf\{c\in\R \text{ such that } g(\D^G_c,\D^G_0)\ge j\}.
$$

\begin{proposition}\label{prop}
Assume $J_\Omega$ satisfies $(PS)_{c_j}$ relative to $\D^G_0$ in $H^1_{0,L}(\C)^G$. 
Then, the following holds:
\begin{itemize}
\vspace{-7pt}
\item[(a)] There exists a sign changing critical point $v\in H^1_{0,L}(\Omega)^G$ of $J_\Omega$ with $J_\Omega(v)=c_j$.
\vspace{-7pt}
\item[(b)] If $c_{j+1}=c_j$, then $J_\Omega$ has infinitely many sign changing  critical points $v\in H^1_{0,L}(\C)^G$ with $J_\Omega(v)=c_j$.
\end{itemize}
\vspace{-7pt}
Consequently, if $J_\Omega$ satisfies $(PS)_c$ reltive to $D^G_0$ in $H^1_{0,L}(\C)^G$ for every $c\le d$, then 
$J_\Omega$ has at least $g(\D^G_d,\D^G_0)$ pairs of sign changing critical points $v$ in $H^1_{0,L}(\C)^G$ with $J_\Omega(v)\le d$.
\end{proposition}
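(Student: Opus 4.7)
The plan is to carry out a classical Ljusternik--Schnirelmann argument based on the relative genus $g(\,\cdot\,,\D^G_0)$ and on a deformation lemma derived from the negative gradient flow $\varphi$. The key properties I will use are: the monotonicity of the relative genus under inclusion, its invariance under odd continuous maps that restrict to the identity on $\D^G_0$, and the standard subadditivity $g({\cal Y}_1\cup {\cal Y}_2,\D^G_0)\le g({\cal Y}_1,\D^G_0)+g({\cal Y}_2)$. The Corollary above already furnishes the odd continuous retraction $\varrho_d:{\cal J}(\D^G_d)\to \D^G_d$ whenever $J_\Omega$ has no sign changing critical points at level $d$, which is the main deformation tool.

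For part (a), I argue by contradiction. Assume $J_\Omega$ has no sign changing critical points with $J_\Omega(v)=c_j$; then by Lemma~\ref{lem:alpha} and the PS$_{c_j}$ condition relative to $\D^G_0$, a standard estimate on the flow produces $\varepsilon>0$ such that every $v\in \D^G_{c_j+\varepsilon}$ is driven in finite time into $\D^G_{c_j-\varepsilon}$: either the trajectory enters one of the strictly positively invariant sets $B_\alpha(\pm {\cal P}^G)$, or the PS estimate gives a uniform lower bound on $\|DJ_\Omega\|$ along the trajectory, forcing the energy to drop below $c_j-\varepsilon$. The map $v\mapsto \varrho_{c_j-\varepsilon}(v)$ is then defined on all of $\D^G_{c_j+\varepsilon}$, odd, continuous, and equals the identity on $\D^G_0$. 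By invariance of the relative genus, $g(\D^G_{c_j+\varepsilon},\D^G_0)\le g(\D^G_{c_j-\varepsilon},\D^G_0)$; the definition of $c_j$ forces the left-hand side to be $\ge j$ and the right-hand side to be $\le j-1$, a contradiction.

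For part (b), suppose $c_{j+1}=c_j=c$ but the set $K_c$ of pairs of sign changing critical points at level $c$ is finite. The PS condition makes $K_c$ compact, so one can choose a symmetric open neighborhood $\U$ of $K_c$, disjoint from $B_\alpha(\pm{\cal P}^G)$, whose relative genus is at most $1$ (by contracting each symmetric pair $\{v,-v\}$ to a line and mapping to $\{\pm 1\}$). A refinement of the deformation in part (a), bypassing a smaller neighborhood of $K_c$, then yields an odd continuous map from $\D^G_{c+\varepsilon}\setminus \U$ into $\D^G_{c-\varepsilon}$. Subadditivity gives
$$
g(\D^G_{c+\varepsilon},\D^G_0)\le g(\D^G_{c-\varepsilon},\D^G_0)+g(\U,\D^G_0)\le (j-1)+1=j,
$$
contradicting $c_{j+1}=c$. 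The final assertion follows by iterating (a) and (b): the numbers $c_1\le c_2\le\cdots\le c_{g(\D^G_d,\D^G_0)}\le d$ are each critical values, and any coincidence $c_j=c_{j+1}$ already yields infinitely many sign changing critical pairs at that level, so in all cases one collects at least $g(\D^G_d,\D^G_0)$ distinct pairs.

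The main obstacle is the deformation lemma itself in this nonlocal setting: one needs an odd continuous flow-based deformation of $H^1_{0,L}(\C)^G$ that simultaneously (i) decreases the energy through $c_j$ outside a neighborhood of the critical set, (ii) preserves the strictly positively invariant sets $B_\alpha(\pm{\cal P}^G)$, and (iii) fixes $\D^G_0$ pointwise. Property (ii) is precisely what Lemma~\ref{lem:alpha}(b) guarantees, and the PS condition relative to $\D^G_0$ is exactly what is needed to obtain the uniform lower bound on $\|DJ_\Omega\|$ away from $K_{c_j}$ that ensures the flow decreases the energy by a definite amount in finite time while remaining in $H^1_{0,L}(\C)^G$.
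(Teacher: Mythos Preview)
Your proposal is correct and follows the classical Ljusternik--Schnirelmann scheme based on the relative genus, deformation via the negative gradient flow, and subadditivity; this is precisely the approach the paper intends. Note, however, that the paper does not give its own proof of this proposition: it simply refers to Proposition~3.6 in \cite{ClappPacella08}, where exactly this argument is carried out in the analogous setting for the Laplacian, so there is no independent proof here to compare against beyond confirming that your outline matches the cited source.
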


Now, we state the mountain pass results for sign changing solutions.

\begin{theorem}\label{mpt}
Let $W$ be a finite dimensional subspace of $H^1_{0,L}(\C)^G$ and let $d:=\sup_W J_\Omega$. 
If $J_\Omega$ satisfies $(PS)_c$ relative to $\D^G_0$ in $H^1_{0,L}(\C)^G$ for every $c\le d$, then 
$J_\Omega$ has at least $\text{dim}(W)-1$ pairs of sign changing critical points $v\in H^1_0(\C)^G$ with 
$J_\Omega(v)\le d$. 
\end{theorem}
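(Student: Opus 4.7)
The strategy is to combine Proposition~\ref{prop} with a topological lower bound on the relative genus. Under the stated Palais--Smale hypothesis, Proposition~\ref{prop} applied at every level $c \le d$ gives at least $g(\D^G_d, \D^G_0)$ pairs of sign-changing critical points of $J_\Omega$ in $J^d$. Since $d = \sup_W J_\Omega$, we have $W \subset J^d \subset \D^G_d$, so by monotonicity of the relative genus in its first argument
\[
g(\D^G_d, \D^G_0) \;\ge\; g(W, \D^G_0).
\]
It therefore suffices to establish the purely topological estimate
\[
g(W, \D^G_0) \;\ge\; \dim(W) - 1.
\]

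I would prove this estimate by contradiction. Assume $g(W, \D^G_0) = m$ with $m + 1 < \dim(W)$, and let $\U_0, \U_1, \dots, \U_m$ be a witnessing symmetric open cover of $W$, with an odd continuous retraction $\vartheta_0 \colon \U_0 \to \D^G_0$ and odd continuous maps $\vartheta_j \colon \U_j \to \{-1,+1\}$ for $j = 1, \dots, m$. Using a symmetric partition of unity $\{\lambda_j\}_{j=0}^{m}$ subordinate to this cover (so that each $\lambda_j$ is even), I would assemble an odd continuous map
\[
F \colon W \setminus \{0\} \longrightarrow \R^{m+1} \setminus \{0\},
\]
whose $j$-th coordinate is $\lambda_j \vartheta_j$ for $j=1,\dots,m$ and whose $0$-th coordinate is $\lambda_0 \cdot (\chi \circ \vartheta_0)$, where $\chi \colon \D^G_0 \to \R$ is an odd continuous map that is nonzero on $\D^G_0 \setminus \{0\}$ and distinguishes the two convex cones $B_\alpha(\pm {\cal P}^G)$. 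Each product $\lambda_j \vartheta_j$ is odd (even times odd), so $F$ is odd, and for $v \neq 0$ the fact that $\sum_j \lambda_j(v) = 1$ together with the nonvanishing of $\chi$ away from the origin ensures $F(v) \neq 0$. Such an $F$ contradicts the Borsuk--Ulam theorem, since $W \setminus \{0\}$ is $\mathbb{Z}/2$-equivariantly homotopy equivalent to a sphere of dimension $\dim(W) - 1 \ge m+1$.

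The main obstacle is the construction of $\chi$, and specifically its nonvanishing on the sign-changing part $J^0 \setminus (B_\alpha({\cal P}^G) \cup B_\alpha(-{\cal P}^G))$. Following the analogous argument in \cite[Section~3]{ClappPacella08}, one exploits the negative gradient flow $\varphi$ of $J_\Omega$: by Lemma~\ref{lem:alpha}(b) the cones $B_\alpha(\pm {\cal P}^G)$ are strictly positively invariant under $\varphi$, $J_\Omega$ decreases strictly along $\varphi$ away from critical points, and Lemma~\ref{lem:alpha}(a) combined with $\inf_{\N(\Omega)^G} J_\Omega > 0$ rules out sign-changing critical points in $J^0$. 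Consequently, every trajectory of $\varphi$ starting at a sign-changing $v \in J^0$ enters $B_\alpha({\cal P}^G)$ or $B_\alpha(-{\cal P}^G)$ in finite time, and one sets $\chi(v) = +1$ or $-1$ accordingly; near the cones one patches with the odd function $v \mapsto \mathrm{dist}(v, -{\cal P}^G) - \mathrm{dist}(v, {\cal P}^G)$ to guarantee continuity across boundaries and nonvanishing off $\{0\}$. Odd equivariance of $\varphi$ and continuous dependence of trajectories on initial data then give that $\chi$ is odd and continuous, closing the Borsuk--Ulam contradiction and completing the proof.
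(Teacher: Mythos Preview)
The paper does not give its own proof of this theorem: immediately after the statement it writes ``For the proofs of Proposition~\ref{prop} and Theorem~\ref{mpt} we refer to Proposition~3.6 and Theorem~3.7 in \cite{ClappPacella08}.'' Your strategy---apply Proposition~\ref{prop}, use monotonicity to reduce to the estimate $g(W,\D^G_0)\ge\dim W-1$, and then derive the latter from a Borsuk--Ulam obstruction---is exactly the Clapp--Pacella scheme the paper is invoking, so in that sense your proposal and the paper's ``proof'' coincide.

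One point deserves more care. Your construction of the odd map $\chi:\D^G_0\to\R$ hinges on the claim that every negative-gradient trajectory starting at a sign-changing $v\in J^0$ reaches $B_\alpha({\cal P}^G)\cup B_\alpha(-{\cal P}^G)$ in finite time. You justify this only by noting that $J^0$ contains no sign-changing critical points; that by itself does not force the trajectory into the cones, since the hypothesis of the theorem gives $(PS)_c$ only \emph{relative to} $\D^G_0$, i.e.\ for sequences lying outside $\D^G_0$, whereas the trajectory in question lives inside $J^0\subset\D^G_0$. The gap is repairable: from the trace-Sobolev inequality one has unconditional $(PS)_c$ for every $c<c_\infty$ (the argument is the one in \eqref{Ap3} of Lemma~\ref{lem_struwe}), hence in particular for all $c\le 0$; combined with the fact that the only critical point in $\{J_\Omega\le 0\}$ is the origin, this forces bounded flow lines in $J^0$ to converge to $0\in B_\alpha({\cal P}^G)$. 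You should state this explicitly and also dispose of the possibility $J_\Omega(\varphi(t,v))\to-\infty$ (or finite-time blow-up), which your sketch leaves open.
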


For the proofs of  Proposition~\ref{prop} and  Theorem~\ref{mpt}  we refer to 
Proposition 3.6 and Theorem 3.7 in \cite{ClappPacella08}.

\section{Existence of multiple solutions in annular-shaped domains}

Let $\Gamma$ be a closed subgroup of $O(n)$, and let 
$$
\ell:=\min\{\text{\#}\Gamma x\ : \ x\in\R^n\setminus \{0\}\}
$$

In the proof of our main theorems we need the two following  compactness lemmas. 

\begin{lemma}\label{cpt} 
The energy functional $J_\Omega$ satisfies $(PS)_c$ in $H_{0,L}^1(\C)^\Gamma$ for every $c<\ell c_\infty$.
\end{lemma}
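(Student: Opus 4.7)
The plan is to combine the standard Palais–Smale argument with a profile (bubble) decomposition in the spirit of Struwe, and then exploit $\Gamma$-invariance to show that each bubble is forced to appear in an orbit of size at least $\ell$, so that any non-trivial bubbling would push the energy above $\ell c_\infty$.

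First I would show that any $(v_m)\subset H^1_{0,L}(\C)^\Gamma$ with $J_\Omega(v_m)\to c$ and $DJ_\Omega(v_m)\to 0$ is bounded. This is routine: the identity
$$
2J_\Omega(v_m)-\langle DJ_\Omega(v_m),v_m\rangle=\tfrac{1}{n}|u_m|_{2^\sharp}^{2^\sharp},
$$
together with the relation $\|v_m\|^2=|u_m|_{2^\sharp}^{2^\sharp}+\langle DJ_\Omega(v_m),v_m\rangle$, gives the uniform bound, just as in the proof of Lemma~\ref{radial}. Passing to a subsequence, $v_m\rightharpoonup v_0$ weakly in $H^1_{0,L}(\C)$, and $v_0\in H^1_{0,L}(\C)^\Gamma$ since the $\Gamma$-action is isometric. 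Using the continuity of the embedding into $L^{2^\sharp}(\Omega)$ and local compactness of the trace operator into $L^p(\Omega)$ for $p<2^\sharp$, one checks that $|u_m|^{2^\sharp-2}u_m\rightharpoonup |u_0|^{2^\sharp-2}u_0$ weakly in $L^{2^\sharp/(2^\sharp-1)}$, so $v_0$ is a critical point of $J_\Omega$. In particular either $v_0=0$ or $v_0\in\N(\Omega)^\Gamma$, and in both cases $J_\Omega(v_0)=\tfrac{1}{2n}\|v_0\|^2\ge 0$.

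Next I would apply the version of Struwe's lemma for \eqref{ext} stated in the appendix of the paper. This yields an integer $k\ge 0$, sequences of rescalings $\varepsilon_m^{(j)}\to 0$ and concentration points $x_m^{(j)}\to x_j\in\overline{\Omega}$, together with bubbles $U_{\varepsilon}$ of the form given by Escobar, such that, after renaming,
$$
v_m=v_0+\sum_{j=1}^{k}U_{\varepsilon_m^{(j)}}(\,\cdot\,-x_m^{(j)},\,\cdot\,)+o(1)\quad\text{in }H^1_{0,L}(\C),
$$
and
$$
J_\Omega(v_m)=J_\Omega(v_0)+k\,c_\infty+o(1).
$$
This is the step I expect to be the main technical input, but it is exactly what the appendix supplies.

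Finally I would use $\Gamma$-invariance to constrain the concentration set. Because each $v_m$ is $\Gamma$-invariant, the family of bubbles is preserved (as a set) under the $\Gamma$-action on the base, so the set of concentration points $\{x_1,\dots,x_k\}$ is $\Gamma$-invariant. Since $0\notin\C$, every concentration point lies in $\R^n\setminus\{0\}$, so its $\Gamma$-orbit has cardinality at least $\ell=\ell(\Gamma)$. Hence, if $k\ge 1$ then $k\ge\ell$, and therefore
$$
c=J_\Omega(v_0)+k\,c_\infty\ge\ell\,c_\infty,
$$
contradicting the assumption $c<\ell c_\infty$. Consequently $k=0$, which means $v_m\to v_0$ strongly in $H^1_{0,L}(\C)^\Gamma$, establishing $(PS)_c$.
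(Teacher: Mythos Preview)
Your argument is essentially the paper's: bound the Palais--Smale sequence, pass to a weak limit which is a critical point, invoke the Struwe-type decomposition from the appendix, and use $\Gamma$-invariance to force at least $\ell$ profiles, contradicting $c<\ell c_\infty$. Two small adjustments are worth flagging. First, the appendix lemma does \emph{not} deliver Escobar instantons $U_\varepsilon$ as profiles---the paper explicitly notes that the relevant Liouville theorem for unbounded solutions is open---but only a nontrivial solution $\omega^0$ of \eqref{ext} in $\R^{n+1}_+$ or in $\R^{n+1}_{++}$; consequently your identity $c=J_\Omega(v_0)+kc_\infty$ should be the inequality $c\ge J_\Omega(v_0)+kc_\infty$, obtained from $J_\Omega(\omega^0)\ge c_\infty$ via the trace-Sobolev inequality (this is exactly the paper's Step~2, and it is all you need). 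Second, the limits $x_j$ could in principle lie on $\partial\Omega$, so it is safer to run the orbit-counting argument on the moving centers $x_m^{(j)}\in\Omega\subset\R^n\setminus\{0\}$, which is how the paper implicitly argues.
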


\begin{proof}
Arguing as in the proof of Lemma~\ref{radial} we have that any  $(PS)_c$ sequence $(v_m)_m$ is bounded. 
Thus, 
\begin{eqnarray}
v_m\rightharpoonup v^0&&\text{in }\  H^1_{0,L}(\C),\label{weakv0}\\
u_m\rightharpoonup u^0 &&\text{in }\  L^{2^\sharp}(\Omega),\label{weak2v0}
\end{eqnarray}
where last line follows from the first one and the trace-Sobolev inequality. Hence, 
\begin{equation}\label{weakw}
w_m=v_m-v^0\rightharpoonup 0\quad \text{in } H^1_{0,L}(\C).
\end{equation}
We assume that $w_m\nrightarrow 0$ in $H^1_{0,L}(\C)$, otherwise there is nothing to proof. 

Now we proceed in two steps:

\medskip
\noindent{\bf Step 1} 
{\it The funtion $v^0\in H^1_{0,L}(\C)$ is a weak solution of \eqref{ext}. Moreover, 
\begin{equation*}
J_\Omega(w_m) \to \beta\le c-c_\infty\quad\text{ and}\quad DJ_\Omega(w_m)\to 0.
\end{equation*} 
}
 Indeed, in view of \eqref{weakv0} and \eqref{weak2v0} for any $\varphi\in C^\infty_{0,L}(\C)$ we obtain 
\begin{eqnarray*}
\langle D J_\Omega(v_m),\varphi\rangle 
& = &\int_\C\nabla v_m\cdot\nabla \varphi \ dxdy-\int_\Omega|u_m|^{2^\sharp-2}u_m\varphi \ dx\\
&\to& \int_\C\nabla v^0\cdot\nabla \varphi \ dxdy-\int_\Omega|u^0|^{2^\sharp-2}u^0\ \varphi \ dx
= \langle D J_\Omega(v^0),\varphi\rangle = 0.
\end{eqnarray*}
Hence, $v^0$ weakly solves \eqref{ext}. 

Because of \eqref{weakv0}, \eqref{weak2v0} and  Vitali's theorem (see e.g. \cite[Theorem I.4.2]{Struwe90}) it follows 
\begin{eqnarray*}
\int_\C|\nabla w_m|^2dxdy&=&\int_\C|\nabla v_m|^2dxdy-\int_\C|\nabla v^0|^2dxdy + o(1),\\ 
\int_\Omega|Tr_\Omega(w_m)|^{2^\sharp}dx & =& \int_\Omega|u_m|^{2^\sharp}dx-\int_\Omega|u^0|^{2^\sharp}dx +o(1).
\end{eqnarray*}
where $o(1)\to 0$ as $m\to\infty$. Hence, 
$$
J_\Omega(w_m)=J_\Omega(v_m)-J_\Omega(v^0)+o(1),
$$
and
$$
DJ_\Omega(w_m)=DJ_\Omega(v_m)-DJ_\Omega(v^0)+o(1)=o(1)
$$
where $o(1)\to 0$ in $H^{-1}_{0,L}(\C)$.
Therefore,  
$$
J_\Omega(w_m)\to \beta \le c-c_\infty,\quad DJ_\Omega(w_m)\to 0,
$$
and the claim follows.
\medskip

\noindent{\bf Step 2}
{\it 
Conclusion.
}

Let $v$ be a weak solution of \eqref{ext} in any domain $\C'=\Omega'\times[0,\infty)\subset \R^{n+1}_+$, then 
$$
\int_{\C'}\nabla v\cdot \nabla \varphi\ dxdy-\int_{\Omega'}|u|^{2^\sharp-2}u\varphi\ dx=0
$$
for every $\varphi \in C^\infty_{0,L}(\C')$.
By approximation, we may choose  $\varphi=v$ to get 
$$
0= \langle D J_\Omega(v),\varphi\rangle =\int_\C|\nabla v|^2dxdy -\int_\Omega |u|^{2^\sharp}dx.
$$
Recalling the trace-Sobolev inequality 
\begin{equation}\label{trace_sob}
S_0|u|^{2}_{2^\sharp} \le \Vert v\Vert_2^2=|u|_{2^\sharp}^{2^\sharp},
\end{equation}
we get  that  any non-trivial critical point satisfies 
$$
J_\Omega(v)=\left(\frac{1}{2}-\frac{1}{2^\sharp}\right)|u|_{2^\sharp}^{2^\sharp}
\ge \frac{1}{2n}S_0^\frac{n-1}{2}=c_\infty>0. 
$$

Under our symmetry assumptions, it follows from  Lemma~\ref{lem_struwe} in the Appendix,  
that there must exist zero, or  at least  $\ell$ subsequences $\widetilde{w}_m^j$, 
$j=1,\dots,\ell$, such that,
$$
J_\Omega(v_m) \ge J_\Omega(v^0)+\sum_{j=1}^\ell J_\Omega(w^j_m)+o(1),
$$ 
where $o(1)\to 0$ as $m\to \infty$. Letting $m\to \infty$ we find
$$
c\ge (\ell+1)c_\infty, 
$$
a contradiction with the assumption $c\le \ell c_\infty$. 
\end{proof}

\begin{lemma}\label{cpt2}
If $\ell \ge 2$ the there exist $\varepsilon_0>0$ such that $J_\Omega$ satisfies $(PS)_c$ relative 
to $\D^\Gamma_0$ in $H^1_{0,L}(\C)^\Gamma$ for every $c<(\ell+1)c_\infty+\varepsilon_0$.
\end{lemma}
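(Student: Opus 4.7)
My strategy is to refine Lemma~\ref{cpt} by tracking the sign of the bubble profiles in Struwe's decomposition and exploiting the restriction $v_m\notin\D^\Gamma_0$ to push the compactness threshold above $(\ell+1)c_\infty$.

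Let $(v_m)\subset H^1_{0,L}(\C)^\Gamma$ be a $(PS)_c$ sequence relative to $\D^\Gamma_0$ with $c<(\ell+1)c_\infty+\varepsilon_0$. Since $v_m$ is $\Gamma$-invariant, so are $v_m^\pm\in\pm{\cal P}^\Gamma$, and hence $\text{dist}(v_m,\pm{\cal P}^\Gamma)\le\Vert v_m^\mp\Vert$. Exactly as in Lemma~\ref{cpt} the sequence is bounded, so up to a subsequence $v_m\rightharpoonup v^0$ weakly in $H^1_{0,L}(\C)$, where $v^0$ is a $\Gamma$-invariant critical point of $J_\Omega$. If $w_m:=v_m-v^0$ converges strongly to $0$, we are done. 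Otherwise I invoke the $\Gamma$-equivariant Struwe-type decomposition (Lemma~\ref{lem_struwe}) to write $w_m=S_m^+-S_m^-+r_m$ with $r_m\to 0$ in $H^1_{0,L}(\C)$, where $S_m^+\ge 0$ and $S_m^-\ge 0$ are $\Gamma$-invariant sums of positive and negative bubble profiles $\pm U_{\varepsilon_j}$ centered at points of $\Omega\subset\R^n\setminus\{0\}$. Since bubble centers must form $\Gamma$-orbits and every such orbit in $\R^n\setminus\{0\}$ has cardinality $\ge\ell$, the total counts $N^+,N^-$ of positive and negative profiles are each either $0$ or at least $\ell$, and the energy identity reads
\[
c = J_\Omega(v^0)+(N^++N^-)c_\infty.
\]

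The argument then splits into three cases. (i) If $v^0$ is sign-changing, testing the Euler-Lagrange equation against $v^{0,\pm}$ shows $v^{0,\pm}\in\N(\Omega)^\Gamma$, so $J_\Omega(v^0)=J_\Omega(v^{0,+})+J_\Omega(v^{0,-})\ge 2c_\infty$; combined with at least one bubble orbit this gives $c\ge(\ell+2)c_\infty$. (ii) If $v^0=0$ and, say, $N^-=0$, then $v^0+S_m^+=S_m^+\in{\cal P}^\Gamma$, so $\text{dist}(v_m,{\cal P}^\Gamma)\le\Vert v_m-S_m^+\Vert=\Vert r_m\Vert\to 0$, forcing $v_m\in B_\alpha({\cal P}^\Gamma)$ for large $m$ and contradicting $v_m\notin\D^\Gamma_0$; by symmetry $N^+=0$ is also excluded, so $N^+,N^-\ge\ell$ and $c\ge 2\ell c_\infty\ge(\ell+2)c_\infty$. (iii) If $v^0\ge 0$ is nontrivial, the same argument applied to $v^0+S_m^+\in{\cal P}^\Gamma$ forces $N^-\ge\ell$, whence $c\ge J_\Omega(v^0)+\ell c_\infty$; the case $v^0\le 0$ is symmetric.

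The decisive step is to quantify a gap in case (iii), namely to establish
\[
c_\Gamma^+:=\inf\{J_\Omega(v)\ :\ v\in\N(\Omega)^\Gamma,\ v\ge 0\}>c_\infty.
\]
Otherwise a minimizing sequence, upgraded by Ekeland's variational principle to a $(PS)_{c_\infty}$ sequence in $H^1_{0,L}(\C)^\Gamma$, would converge by Lemma~\ref{cpt} (applicable because $c_\infty<\ell c_\infty$ when $\ell\ge 2$) to a nonnegative minimizer of $J_\Omega$ on $\N(\Omega)$, contradicting the classical non-attainment of $c_\infty$ on bounded $\Omega$. With this gap in hand, choosing any $\varepsilon_0<\min\{c_\infty,\ c_\Gamma^+-c_\infty\}$ makes case (iii) yield $c\ge(\ell+1)c_\infty+(c_\Gamma^+-c_\infty)>(\ell+1)c_\infty+\varepsilon_0$, while cases (i) and (ii) already exceed $(\ell+2)c_\infty$, giving in every scenario a contradiction with the hypothesis on $c$. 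The main obstacle I anticipate is precisely this gap inequality $c_\Gamma^+>c_\infty$, which couples the symmetric compactness of Lemma~\ref{cpt} (where $\ell\ge 2$ is crucial) with the classical non-attainment of the trace-Sobolev constant on bounded domains.
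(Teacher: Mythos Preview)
Your overall strategy is sound and in fact treats the weak limit $v^0$ more carefully than the paper does, but there is one genuine gap in the decomposition step, and the route you take is structurally different from the paper's.

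\textbf{The gap.} You write the Struwe remainder as $S_m^+-S_m^-$, a sum of \emph{signed} bubbles $\pm U_{\varepsilon_j}$. In this setting that is not justified: as the introduction of the paper notes, the Liouville classification for unbounded solutions of \eqref{half} in $\R^n_+$ is open, and Lemma~\ref{lem_struwe} only produces profiles $\omega^0$ that are nontrivial solutions of \eqref{ext} in $\R^{n+1}_+$ or in $\R^{n+1}_{++}$, a priori possibly sign-changing. Before you can split into $S_m^\pm$ you must rule out sign-changing profiles. The paper does this by an energy argument: testing the limiting equation against $\omega^0_\pm$ gives $J(\omega^0_\pm)\ge c_\infty$, hence $J(\omega^0)\ge 2c_\infty$, and then $\Gamma$-symmetry forces $c\ge 2\ell c_\infty\ge(\ell+2)c_\infty$, a contradiction. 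Once this is inserted, your $S_m^+/S_m^-$ decomposition becomes legitimate and the rest of your argument goes through.

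\textbf{Comparison with the paper.} The paper does not run your three-case analysis on $v^0$. Instead it argues directly that, once profiles are signed, the whole sequence is approximated by exactly one $\Gamma$-orbit of $\ell$ bubbles of a common sign, so $\operatorname{dist}(v_m,{\cal P}^\Gamma\cup-{\cal P}^\Gamma)\to 0$, contradicting $v_m\notin\D^\Gamma_0$; this nominally yields $\varepsilon_0=c_\infty$. Your route is different: you allow a nontrivial weak limit $v^0$ and handle it via the strict gap $c^+_\Gamma>c_\infty$, proved by Ekeland plus Lemma~\ref{cpt} (here $\ell\ge 2$ is essential) together with non-attainment of $c_\infty$ on bounded $\Omega$. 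This extra ingredient is not in the paper's proof, and it is exactly what is needed to close your case~(iii); the price is a possibly smaller $\varepsilon_0<c^+_\Gamma-c_\infty$, which is immaterial for the existence statement of the lemma.
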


\begin{proof}
Let $\varepsilon_0\in(0,c_\infty]$ and $(v_m)_m$ be a sequence such that 
$$
v_m\notin \D^\Gamma_0, \quad J_\Omega(v_m)\to c<(\ell+1)c_\infty+\varepsilon_0,\quad DJ_\Omega(v_m)\to 0.
$$
By contradiction assume that  $(v_m)_m$ has no convergent subsequence. We claim that 
there exist $\ell$ sequences $(\widetilde{w}^j_m)_m$ , $j=1,\dots, \ell$ with
\begin{equation}\label{conAp}
\widetilde{w}_m^j(x,y)=(R^j_m)^\frac{n-1}{2}\omega^0(R^j_m(x-x_m^j),R^j_my)
\end{equation}
where $\omega^0\ge 0$ (or $\omega^0\le 0$) is a weak solution of \eqref{ext} in either $\R^{n+1}_{+}$ or in 
$\R^{n+1}_{++}$, and such that 
\begin{equation}\label{conv}
\left\Vert v_m-\sum_{j=1}^\ell \widetilde{w}^j_m\right\Vert \to 0.
\end{equation}
Assuming the claim for a moment we conclude the proof. From \eqref{conv} we  have that 
$$
\text{dist}(v_m,{\cal P}^\Gamma\cup -{\cal P}^\Gamma)\to 0\quad\text{as}\quad m\to \infty,
$$ 
contradicting the fact that $v_m\notin D^\Gamma_0$.
Hence, the conclusion of the lemma follows.

Now we prove our claim. 
By symmetry of the problem, from Lemma~\ref{lem_struwe} in the Appendix,  
there exist at least $\ell$ sequences $(\widetilde{w}^j_m)_m$ such that \eqref{conAp}  holds 
and $\omega^0$ is a weak  solution of \eqref{ext} in  either $\R^{n+1}_{+}$ or in $\R^{n+1}_{++}$.  
Now, to show that that $\omega^0\ge 0$ (or equivalently $\omega^0\le 0)$, we decompose 
the solution $\omega^0$ into its positive and negative parts 
$$
\omega^0=\omega^0_++\omega^0_-,
$$ 
where $\omega_\pm^0=\pm\max\{\pm \omega^0,0\}$. Upon testing \eqref{ext} with $\omega^0_\pm$ 
from the trace-Sobolev inequality \eqref{trace_sob} we find $J_\Omega(\omega^0_\pm)\ge c_\infty$. 
Hence,  
$$
J_\Omega(\omega^0)=J_\Omega(\omega^0_+)+J_\Omega(\omega^0_-) > 2c_\infty.
$$
 Because  invariance under rescaling and symmetry, we have
 $$
 J_\Omega(v_m)\to c \ge \ell J_\Omega(\omega^0) >  2\ell c_\infty\quad\text{as } \ m\to\infty.
 $$
But,  in turn this imply that 
$$
2\ell c_\infty  < (\ell+1)c_\infty+\varepsilon_0 \le (\ell+2)c_\infty.
$$ 
Thus  $\ell < 2$, a contradiction. Hence, either  $\omega_+^0\equiv 0$ or $\omega^0_-\equiv 0$.

Finally, we only need to show \eqref{conv}.  Assume by contradiction that 
$$
v_m-\sum_{j=1}^\ell\widetilde{w}_m^j\nrightarrow 0\quad\text{ in }\ H^1_{0,L}(\C).
$$
Hence, on the one hand side, we may apply Lemma~\ref{lem_struwe} to conclude that there exist a solution $z^0$ of \eqref{ext} and 
a sequence   $\widetilde{z}_m=(R_m)^\frac{n-1}{2}z^0(R_m(x-x_m^j),R_my)$ such that 
\begin{eqnarray*}
J_\Omega\left(v_m-\sum_{j=1}^\ell\widetilde{w}_m^j-\widetilde{z}_m\right)
&= & J_\Omega(v_m)-\ell J_\Omega(\omega^0)- J_\Omega(z^0)+o(1)\\
&<& (\ell+1)c_\infty+\varepsilon_0-(\ell+1) c_\infty<c_\infty.
\end{eqnarray*}
Thus, by Lemma~\ref{lem_struwe}, we find
\begin{equation}\label{onez}
v_m-\sum_{j=1}^\ell\widetilde{w}_m^j-\widetilde{z}_m\to 0\quad\text{ in }\ H^1_{0,L}(\C).
\end{equation}
On the other hand side, because the symmetry assumptions, there must exist $\ell$ sequences 
$(\widetilde{z}_m^j)_m$, $j=1,\dots,\ell$ (all of them generated by $z^0$), such that  
\begin{eqnarray*}
 J_\Omega\left(v_m-\sum_{j=1}^\ell\widetilde{w}_m^j-\sum_{j=1}^\ell\widetilde{z}_m^j\right)
&= & J_\Omega(v_m)-\ell J_\Omega(\omega^0)- \ell J_\Omega(z^0)+o(1)\\
&\le  &c-\ell c_\infty <(2-\ell)c_\infty < c_\infty.
\end{eqnarray*}
Thus,
\begin{equation}\label{ellz}
v_m-\sum_{j=1}^\ell\widetilde{w}_m^j-\sum_{j=1}^\ell\widetilde{z}^j_m\to 0\quad\text{ in }\ H^1_{0,L}(\C).
\end{equation}
Because of the symmetry, combining \eqref{onez} and \eqref{ellz}, we have 
$$
\widetilde{z}_m^j\to 0\quad\text{ in }\ H^1_{0,L}(\C)
$$
for every $j=1,\dots,\ell$,  a contradiction. Hence,  \eqref{conv} and the lemma follows.
\end{proof}


Now we can give the proofs of our main theorems:

\begin{proof}[Proof of Theorem~\ref{main}]
Let 
$$\ell_0:=\frac{1}{c_\infty}mc(R_1^{1/m},R_2^{1/m})
\quad\text{and}\quad  \omega_1,\dots,\omega_m\in \N(\Omega)^\Gamma$$ 
be positive radial functions as in Lemma~\ref{pos}. 

Let $W_k:=\text{span}\{\omega_1,\dots,\omega_k\}$ be the vector space generated by the first $k$ functions $\omega_1,\dots,\omega_k$. Since for  $i\neq j$ the functions $\omega_i$ and $\omega_j$ have disjoint support, they are orthogonal in $H^1_{0,L}(\C)^\Gamma$. Thus,   $\dim W_k=k$.

Because $\omega_1,\dots,\omega_m\in \N(\Omega)^\Gamma$, for each $k=1,\dots, m$ we have 
$$
\max_{W_k} J_\Omega \le \sum_{i=1}^k\max_{t}J_\Omega(t\omega_i)\le kc(R_1^{1/m},R_2^{1/m}) \le \ell_0c_\infty.
$$
By assumption $\ell > \ell_0 $, then 
$$
\max_{W_k} J_\Omega \le \ell c_\infty 
$$  
and Lemma~\ref{cpt} implies that 
$$
\inf_{\N(\Omega)^\Gamma} J 
$$
is attained at a positive solution $v_1\in \N(\Omega)^\Gamma$ with 
$$
J_\Omega(v_1)\le c(R_1^{1/m},R_2^{1/m}).
$$ 
Moreover, Theorem~\ref{mpt} and Lemma~\ref{cpt2} yield the 
existence of $m-1$ distinct pairs of sign changing critical points 
$\pm v_2,\dots,\pm v_m\in \N(\Omega)^\Gamma$ of $J_\Omega$ with 
$$
J_\Omega(v_k)\le kc(R_1^{1/m},R_2^{1/m}) ,
$$
and this finishes the proof.\end{proof}

\begin{proof}[Proof of Theorem~\ref{main2}]
Let  $\varepsilon_0\in (0,c_\infty)$ be as in Lemma~\ref{cpt2}. Assume without loss of generality that $\delta<\varepsilon_0$. Due to the dilation invariance of $J_\Omega$,  $c(R_1,R_2)=c(R_1/R_2,1)$ and it is easy to see 
that 
$$
c(R,1)\to c_\infty\qquad\text{as }\quad R\to 0.
$$
 Therefore, there exist $R_\delta$ such that 
 $$
 c(R_1^\frac{1}{\ell+1},R_2^\frac{1}{\ell+1})<c_\infty +\frac{\delta}{\ell+1}\qquad\text{ if } \ R_1/R_2<R_\delta.
 $$
As in the proof of Theorem~\ref{main} let $W_k:=\text{span}\{\omega_1,\dots,\omega_k\}$ be the space generated by  $\omega_1,\dots,\omega_k$. As before   $\dim W_k=k$, and for each $k=1,\dots, \ell+1$,   
$$
\max_{W_k} J \le \sum_{i=1}^k\max_{t}J(t\omega_i)\le kc(R_1^\frac{1}{\ell+1},R_2^\frac{1}{\ell+1}) \le 
kc_\infty+\delta<(\ell+1)c_\infty+\varepsilon_0.
$$
Since $\ell\ge 2$, we have that $J_\Omega(\omega_1)\le c_\infty+\delta<\ell c_\infty$.  Thus, on the one hand side, Lemma~\ref{cpt} yields the existence of a positive  solution $v_1\in \N(\Omega)^\Gamma$ with $J_\Omega(v_1) \le c_\infty+\delta$. 

On the other hand side,  Theorem~\ref{mpt} and Lemma~\ref{cpt2} gives the existence of $\ell$ distinct pairs 
of sign changing critical points  $\pm v_2,\dots,\pm v_{\ell+1}\in \N(\Omega)^\Gamma$ of $J_\Omega$ with 
$$
J_\Omega(v_k)\le kc_\infty+\delta, \qquad k=2,\dots,\ell+1,
$$
and this finishes the proof.
\end{proof}

\section{Appendix}

This appendix is devoted to prove Struwe's lemma (see Lemma 3.3 Chapter III in \cite{Struwe90}) in the context of problem \eqref{ext}.  As  mentioned in the introduction, a Liouville theorem in all of $\R^n_+$
for {\it unbounded} solutions of \eqref{half} is still an open question.  Since this theorem is part of the ingredients 
needed in the proof Struwe's lemma, we don't achieve its full strength. Nevertheless, for our purposes the version below 
happens to be enough.      

\begin{lemma}\label{lem_struwe}
Assume $(w_m)_m$ is a $(P.S.)_c$ sequence for $J_\Omega$ in $H^1_{0,L}(\C)$ such that $w_m\rightharpoonup 0$ weakly in $H^1_{0,L}(\C)$ . Then, there exist sequences $(x_m)_m\subset\Omega$, $(R_m)_m$ of raddi 
$R_m\to \infty$ (as $m\to \infty$), a nontrivial solution $\omega^0$ of \eqref{ext} in $\R^{n+1}_+$,  or in $\R^{n+1}_{++}$, and a $(P.S.)_\beta$ sequence  $(\widetilde{w}_m)_m$ for $J_\Omega$ in $H^1_{0,L}(\C)$ such that 
for a subsequence $(w_m)_m$ it holds
\begin{equation}\label{Ap1}
\widetilde{w}_m=w_m-R_m^\frac{n-1}{2}\omega^0(R_m(\cdot -x_m),R_my)\ \rightharpoonup \ 0\quad\text{weakly in } 
\ H^1_{0,L}(\C).
\end{equation}
Furthermore,
\begin{equation}\label{Ap2}
J_\Omega(\widetilde{w}_m)\to \beta=c-J_\Omega(\omega^0)\quad\text{ in }\ H^1_{0,L}(\C).
\end{equation}
Finally, 
\begin{equation}\label{Ap3}
\text{if }\  J_\Omega(w_m)\to c<c_\infty\quad\text{ then }\quad
w_m\to 0 \qquad\text{in}\quad H^1_{0,L}(\C).
\end{equation}

\end{lemma}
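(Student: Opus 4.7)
The plan is to follow Struwe's classical concentration-compactness scheme, adapted to the cylinder formulation \eqref{ext}. The three ingredients are a L\'evy-type concentration function, a rescaling that exploits the dilation isometry \eqref{inv}, and a Brezis--Lieb-type energy splitting.

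First I would establish boundedness of $(w_m)_m$ in $H^1_{0,L}(\C)$ by the standard computation $2J_\Omega(w_m)-\langle DJ_\Omega(w_m),w_m\rangle=\frac{1}{n}|u_m|_{2^\sharp}^{2^\sharp}$, exactly as in the proof of Lemma~\ref{radial}. Then I would introduce the concentration function
$$Q_m(r):=\sup_{x\in\overline{\Omega}}\int_{B_r^+((x,0))\cap\C}|\nabla w_m|^2\,dxdy,$$
where $B_r^+$ denotes the upper half-ball. If $\sup_r Q_m(r)\to 0$, a Lions-type vanishing argument combined with $w_m\rightharpoonup 0$ and the subcritical trace embeddings gives $|u_m|_{2^\sharp}\to 0$; then $\langle DJ_\Omega(w_m),w_m\rangle=o(1)$ forces $\Vert w_m\Vert\to 0$. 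Since any nontrivial weak solution of \eqref{ext} on $\R^{n+1}_+$ or on $\R^{n+1}_{++}$ has energy at least $c_\infty$ (apply \eqref{trace_sob} to $\omega^0_\pm$), this vanishing alternative is forced whenever $c<c_\infty$, which proves \eqref{Ap3}.

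If concentration does not vanish, fix $\delta\in(0,c_\infty)$ small and pick $R_m>0$, $x_m\in\overline{\Omega}$ so that $\int_{B^+_{1/R_m}((x_m,0))\cap\C}|\nabla w_m|^2\,dxdy=\delta$. Weak convergence $w_m\rightharpoonup 0$ and local compactness of traces on fixed balls force $R_m\to\infty$. Consider the rescaling
$$\hat w_m(x,y):=R_m^{-(n-1)/2}w_m(R_m^{-1}x+x_m,R_m^{-1}y),$$
which, by \eqref{inv}, is uniformly $H^1$-bounded on every compact subset of the blown-up domain $\C_m:=R_m(\C-(x_m,0))$. Depending on whether $R_m\,\text{dist}(x_m,\partial\Omega)$ diverges or stays bounded, the $\C_m$ exhaust (possibly after a further translation in the base) either $\overline{\R^{n+1}_+}$ or $\overline{\R^{n+1}_{++}}$. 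Extract a subsequence $\hat w_m\rightharpoonup \omega^0$ weakly; the compactness of the trace embedding for subcritical exponents on bounded pieces passes the nonlinear term, so $\omega^0$ is a weak solution of \eqref{ext} on the limit domain, and nontriviality follows from the normalization $Q_m(1/R_m)=\delta$.

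Finally set $\widetilde w_m(x,y):=w_m(x,y)-R_m^{(n-1)/2}\omega^0(R_m(x-x_m),R_my)$, so that \eqref{Ap1} holds by construction. A Brezis--Lieb decomposition based on a.e.\ convergence and the isometry \eqref{inv} then yields $\Vert w_m\Vert^2=\Vert\widetilde w_m\Vert^2+\Vert\omega^0\Vert^2+o(1)$, the analogous splitting of $|u_m|_{2^\sharp}^{2^\sharp}$, and a parallel splitting of $DJ_\Omega$ in the dual norm; together these give \eqref{Ap2} and the $(PS)_\beta$ property of $\widetilde w_m$. The main obstacle is the third step: in Struwe's original argument on $\R^n$ one identifies $\omega^0$ as a standard bubble via a Liouville theorem and pins down $J_\Omega(\omega^0)=c_\infty$; for \eqref{ext} on $\R^{n+1}_+$ the corresponding Liouville statement for unbounded solutions is open, as noted in the introduction and in \cite{CabreTan09}. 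The statement of Lemma~\ref{lem_struwe} deliberately sidesteps this: it only asks that $\omega^0$ be a nontrivial weak solution on the half-space or quadrant, and the lower bound $J_\Omega(\omega^0)\ge c_\infty$ used downstream in Lemmas~\ref{cpt} and \ref{cpt2} comes from the trace-Sobolev inequality alone, with no classification required.
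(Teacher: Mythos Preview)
Your plan is correct and follows the same Struwe concentration--compactness scheme as the paper. The only substantive difference is your choice of concentration function: you measure gradient energy on half-balls $B^+_r((x,0))\cap\C$, whereas the paper measures the trace mass $\sup_x\int_{B(x,r)}|Tr_\Omega(w_m)|^{2^\sharp}dx$ on balls in $\R^n$. Both variants are standard and lead to the same conclusion; the trace-based version is slightly more natural here because the nonlinearity lives on $\Omega\times\{0\}$ and it makes the dichotomy ``$|u_m|_{2^\sharp}\to 0$ versus concentration'' immediate, while your version requires one extra step to link gradient concentration back to trace mass.

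Two places deserve tightening. First, your justification that $R_m\to\infty$ via ``local compactness of traces on fixed balls'' does not quite match your gradient-based $Q_m$: weak $H^1$ convergence does not force $\int_{B}|\nabla w_m|^2\to 0$ on a fixed half-ball, so the argument has to run the other way---one first shows $\omega^0\neq 0$, and then boundedness of $R_m$ would contradict $w_m\rightharpoonup 0$ since a bounded rescaling preserves the weak limit. Second, the sentence ``nontriviality follows from the normalization $Q_m(1/R_m)=\delta$'' hides the most technical step. The paper spends the bulk of its proof here: it introduces the dual representatives $f_m,g_m$ of $DJ_\Omega(w_m)$, uses a cutoff $h$ supported in a unit half-ball together with the $(PS)$ condition to show that if $\omega^0=0$ then $\nabla\omega_m\to 0$ in $L^2_{loc}$, and then invokes the pointwise trace inequality \eqref{ineqtr} to deduce $Tr_\Omega(\omega_m)\to 0$ in $L^{2^\sharp}_{loc}$, contradicting the normalization. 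Your sketch is compatible with this, but in a write-up you would need to supply the analogue of that local smallness argument. Your treatment of \eqref{Ap3} and of the Brezis--Lieb splitting for \eqref{Ap2} matches the paper's.
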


\medskip
\begin{proof}
First, we give the argument for  \eqref{Ap3}.  Assuming $J(w_m)\to c<c_\infty$,
because  $w_m$ is a $(PS)_c$ sequence and $w_m\rightharpoonup 0$ weakly, we have
$$
\langle DJ(w_m),w_m\rangle =\Vert w_m\Vert^2 - | w_m|^{2^\sharp}_{2^\sharp} \to 0
$$
Thus, we may assume 
\begin{equation}\label{Ap4}
\Vert w_m\Vert^2\to b\quad\text{and}\quad | w_m|^{2^\sharp}_{2^\sharp}\to b.
\end{equation}
By the trace-Sobolev inequality, we have
$$
\Vert w_m\Vert^2\ge S_0|w_m|^2_{2^\sharp}.
$$
Hence $b\ge S_0b^{2/{2^\sharp}}$ and either $b=0$ or $b\ge S_0^\frac{n-1}{2}$. 
Assuming the latter  $b\ge S_0^\frac{n-1}{2}$, we obtain 
$$
c_\infty=\left(\frac{1}{2}-\frac{1}{2^\sharp}\right)S_0^{\frac{n-1}{2}}\le  \left(\frac{1}{2}-\frac{1}{2^\sharp}\right)b<c_\infty.
$$
a contradiction. Thus $b=0$ and \eqref{Ap3} follows from  \eqref{Ap4}.

Second, we present the argument for \eqref{Ap1} and \eqref{Ap2}.
If $Tr_\Omega(w_m)\to 0$ in $L^{2^\sharp}(\Omega)$, arguing as in the proof of 
 Lemma~\ref{radial} we conclude that  $w_m\to 0$ in $H^1_{0,L}(\C)$, and there is nothing to prove. 
 Hence, we assume $Tr_\Omega(w_m)\nrightarrow 0$ in $L^{2^\sharp}(\Omega)$.  
Thus
$$
\int_\Omega|Tr_\Omega(w_n)|^{2^\sharp}dx\ge \delta \quad \text{ for some }\ \delta \in (0,c_\infty).
$$
 We define the Levy concentration function
$$
Q_m(r):=\sup_{x\in\R^n} \int_{B(x,r)}|Tr_\Omega(w_m)|^{2^\sharp}dx.
$$ 
Since $Q_m(0)=0$ and $Q_m(\infty)>\delta$ there exist a subsequence of $(w_m)_m$, 
sequences $(x_m)_m$ and $(R_m)_m$ such that $x_m\in \Omega$, $R_m>0$ and 
$$
\delta=
\sup_{x\in\R^n}\int_{B(x,R_m)}|T_\Omega(w_m)|^{2^\sharp}dx
=\int_{B(x_m,R_m)}|T_\Omega(w_m)|^{2^\sharp}dx.
$$
In view of $w_m\rightharpoonup 0$  it follows that $R_m\to \infty$ as $m\to \infty$.

Now, letting  
$$
\omega_m(x,y):=(R_m)^\frac{1-n}{2}w_m(x/R_m+x_m,y/R_m),
$$
its is clear, due to the scaling invariance, that 
$$
\delta=
\sup_{x\in\R^n}\int_{B(x,1)}|T_\Omega(\omega_m)|^{2^\sharp}dx
=\int_{B(0,1)}|T_\Omega(\omega_m)|^{2^\sharp}dx.
$$
Moreover, we may assume 
\begin{eqnarray*}
&&\omega_m\rightharpoonup \omega^0\quad\text{in }\ H^1(\R^{n+1}_+),\\
&&\omega_m\to \omega^0\quad \text{a.e. in }\ \R^{n+1}_+.
\end{eqnarray*}
We claim that 
\begin{equation}\label{wnotzero}
\omega^0\neq 0.
\end{equation}

Indeed, let us define $\Omega_m:=\{x\in\R^n\text{ such that }  x/R_m+x_m\in \Omega \}$, $\C_m:=\Omega_m\times [0,\infty)$, and $f_m\in H^1_{0,L}(\C)$ such that 
$$
\langle DJ_\Omega(w_m),h\rangle =\int_\C\nabla f_m\cdot\nabla h\ dxdy\quad \text{for every }\ h\in H^1_{0,L}(\C).
$$
Thus, $g_m(x,y):=R_m^\frac{1-n}{2}\ f_m(x/R_m+x_m,y/R_m)$ satisfies 
\begin{equation}\label{eqg}
\langle DJ_\Omega(w_m),h\rangle =\int_{\C_m}\nabla g_m\cdot\nabla h\ dxdy\quad \text{for every }\ h\in H^1_{0,L}(\C_m).
\end{equation}
Because $w_m\rightharpoonup 0$ in $H^1_{0,L}(\C)$ and the scaling invariance, we get 
\begin{equation}\label{eqng}
\int_{\C_m}|\nabla g_m|^2dxdy=\int_\C|\nabla f_m|^2dxdy=o(1).
\end{equation}

Now, we show \eqref{wnotzero} by contradiction. 
Assume  $\omega^0=0$, thus
\begin{equation}\label{wl2loc}
\omega_n\to 0\text{ in }  L^2_{loc}(\R^{n+1}_+).
\end{equation}
Let $h\in C^\infty_c(\R^{n+1}_+)$ be such that $\text{supp } h \subset B^+((x,y),1)=\{(x,y)\in B((x,y),1)\ \text{ s.t. }y\ge 0\}\subset\R^{n+1}_+$ for some $(x,y)\in \R^{n+1}_+$ and $|h|\le 1$. We find
\begin{eqnarray*}
\int_{\C_m}|\nabla(h\omega_m)|^2dxdy 
& = & \int_{\C_m}\nabla \omega_m\cdot\nabla(h^2\omega_m) dxdy
+\underbrace{\int_{\C_m}|\nabla h|^2|\omega_m|^2dxdy}_{\stackrel{\eqref{wl2loc}}{=}o(1)}\\
& \stackrel{\eqref{eqg}}{\le} & \int_{\C_m}\nabla g_m\cdot \nabla (h\omega_m)h \ dxdy
+\underbrace{\int_{\C_m}\nabla g_m\cdot\nabla h (h\omega_m)dxdy}_{\stackrel{\eqref{eqng}}{=}o(1)}+o(1)\\
&\le&\frac{1}{2}\int_{\C_m}|\nabla(h\omega_m)|^2dxdy +\underbrace{\frac{1}{2}\int_{\C_m}|\nabla g_m|^2dxdy+o(1)}_{\stackrel{\eqref{eqng}}{=}o(1)},
\end{eqnarray*}
where in the last line we have used $|h|\le 1$,  and the Cauchy inequality. Hence, 
\begin{equation}\label{wt0}
\nabla \omega_m\to 0 \quad\text{in }\ L^2_{loc}(\R^{n+1}_+).
\end{equation}
Now, we claim
\begin{equation}\label{ineqtr}
\int_{\R^n}|Tr_\Omega(w)|^{2^\sharp}|Tr_\Omega(\varphi)|^2dx\le 
S_0^{-\frac{1}{2}}\left(\int_{\text{supp}\  w}\hspace{-7pt}|Tr_\Omega(w)|^{2^\sharp}dx\right)^\frac{1}{n}
\hspace{-2pt}\left(\int_{\R^{n+1}_+}\hspace{-4pt}|\nabla(w\varphi)|^2dxdy\right)^2,
\end{equation}
for every $\varphi\in C^\infty_c(\R^{n+1}_+)$, $w\in H^1(\R^{n+1}_+)$.

Indeed, H\"older inequality implies that 
$$
\int_{\R^n}|Tr_\Omega(w)|^{2^\sharp}|Tr_\Omega(\varphi)|^2dx\le 
\left(\int_{\text{supp}\  w}\hspace{-7pt}|Tr_\Omega(w)|^{2^\sharp}dx\right)^\frac{1}{n}
\hspace{-2pt}\left(\int_{\R^{n}}\hspace{-4pt}|Tr_\Omega(w\varphi)|^{2^\sharp}dx\right)^{\frac{n-1}{n}},
$$
and it suffices to use the trace-Sobolev inequality to get \eqref{ineqtr}. 

Combining  \eqref{wt0} and \eqref{ineqtr} we conclude that $Tr_\Omega(\omega_m)\to 0$ in $L^{2^\sharp}_{loc}(\R^n)$. But this is a contradiction with the assumption $\delta>0$. Hence, \eqref{wnotzero} follows. 

Now, we let 
$$
\widetilde{w}_m=w_m-R_m^\frac{n-1}{2}\omega^0(R_m(\cdot-x_m),R_my)
$$
and 
$$
\Omega_\infty=\lim_{m\to\infty} \Omega_m.
$$
By construction $\widetilde{w}_m\rightharpoonup 0$ in $H^1_{0,L}(\C)$, and it is easy to see that 
we have two possibilities: 
$$\Omega_\infty=\R^n\quad\text{ or }\quad  \Omega_\infty=\R^n_+.$$ 
In either case, we may proceed as in the proof of Step 1 to show that $w^0$ is a weak solution of \eqref{ext}  in $\C_\infty=\Omega_\infty\times[0,\infty)$,  
\begin{eqnarray*}
J_\Omega(\widetilde{w}_m)=J_\Omega(w_m ) -J_\Omega(\omega^0) +o(1)\  \to\  
c-J_\Omega(\omega^0),
\end{eqnarray*}
and 
$$
DJ_\Omega(\widetilde{w}_m)\to 0,
$$
as claimed.
\end{proof}

\subsection*{Acknowledgements}
The author wish to thanks Monica Clapp for suggesting the methods and very fruitful discussions.

\end{document}